%
\documentclass[a4paper, 11pt]{amsart}
\usepackage{longtable, stackrel, multicol, fancybox, tcolorbox}
\usepackage{bm,amsfonts,amsmath,amssymb,mathrsfs,bbm,amsthm} 
\usepackage{graphicx, color,hyperref,eurosym, eucal,palatino}

\definecolor{darkblue}{rgb}{0.2,0.2,0.6}
\definecolor{darkblue2}{rgb}{0.2,0.2,0.9}
\definecolor{superdarkblue}{rgb}{0.2,0.2,0.3}
\hypersetup{colorlinks=true, linkcolor=darkblue,citecolor=darkblue, 
	urlcolor = superdarkblue}
\definecolor{citegreen}{rgb}{0.2,0.2,0.6}
\usepackage{marginnote}
\usepackage{geometry}
\geometry{
	papersize = {8.5in,11in},
	lmargin=1.55 in,
	rmargin = 1.55 in,
	top=1.25 in,
}

\usepackage{scrextend}
\changefontsizes{10.5pt}

\newcommand\OpM{\sfH_{\bb,\cM}}
\newcommand\OpB{\sfH_{\bb,\cB^\circ}}

\newcommand\frmM{\frh_{\bb,\cM}}
\newcommand\frmB{\frh_{\bb,\cB^\circ}}
\newcommand\DtNM{\sfD_{\lm,\cM}}

\newcommand\EvM{\lm_\bb(\cM)}
\newcommand\EvB{\lm_\bb(\cB^\circ)}

\newcommand\Opm{\sfH_{\bb,\Lm_\frm}}
\newcommand\Opb{\sfH_{\bb,\Lm_\frb}}
\newcommand\frmm{\frh_{\bb,\Lm_\frm}}

\newcommand\sg{\sigma}

\newcommand\Lm{\Lambda}

\newcommand\bb{\beta}

\newcommand\nb{\nabla}
\newcommand\frm{\mathfrak{m}}

\newcommand\frb{\mathfrak{b}}
\newcommand\frh{\mathfrak{h}}
\newcommand\Op{\sfH_{\Lm_\frm}}
\newcommand\Ev{\lm(\Lm_\frm)}

\newcommand{\eg}{{\it e.g.}\,}
\newcommand{\ie}{{\it i.e.}\,}
\newcommand{\cf}{{\it cf.}\,}

\renewcommand\and{\qquad\text{and}\qquad}

\newcommand\sm{\setminus}
\newcommand\tm{\times}
\newcommand\form{\frh_{\Lm_\frm}}

\newcommand{\coloneqq}{:=}

\newcommand{\comm}[1]{}

\renewcommand\aa{\alpha}
\newcommand\lm{\lambda}

\newcommand\p{\partial}

\renewcommand\tt{\theta}

\newcommand\ii{{\mathsf{i}}}

\newcommand\arr{\rightarrow}


\newcommand\s{\rm Spec}
\newcommand\sd{\rm Spec_{\rm disc}}
\newcommand\sess{\rm Spec_{\rm ess}}

\newcommand\distqq{\rho_{\p\cM}}

\newcommand\areaqq{A_\cM}
\newcommand\areadisk{A_{\cB^\circ}}

\DeclareMathOperator{\inj}{inj}

\newcommand\dd{{\mathsf{d}}}

\newcounter{counter_a}
\newenvironment{myenum}{\begin{list}{{\rm(\roman{counter_a})}}%
{\usecounter{counter_a}
\setlength{\itemsep}{1.ex}\setlength{\topsep}{0.8ex}
\setlength{\leftmargin}{5ex}\setlength{\labelwidth}{5ex}}}{\end{list}}

\usepackage[latin1]{inputenc}
\usepackage[T1]{fontenc}

\numberwithin{figure}{section}
\numberwithin{equation}{section}
\theoremstyle{plain}
\newtheorem*{thm*}{Theorem}
\newtheorem{thm}{Theorem}[section]

\newtheorem{lem}[thm]{Lemma}
\newtheorem{prop}[thm]{Proposition}

\newtheorem{cor}[thm]{Corollary}

\theoremstyle{remark}

\theoremstyle{plain}


%





\def\frb{{\mathfrak b}}

      \def\dC{{\mathbb C}}

      \def\dR{{\mathbb R}}
\def\dS{{\mathbb S}}      
      
   \def\dZ{{\mathbb Z}}

   \def\sfB{{\mathsf B}}   
\def\sfD{{\mathsf D}}      
   \def\sfH{{\mathsf H}}

   \def\cB{{\mathcal B}}   
\def\cD{{\mathcal D}}

\def\cM{{\mathcal M}}   \def\cN{{\mathcal N}}

\newcommand{\dom}{\mathrm{dom}\,}

\makeatletter
\def\section{\@startsection{section}{1}\z@{.9\linespacing\@plus\linespacing}%
	{.7\linespacing} {\fontsize{13}{14}\selectfont\bfseries\centering}}
\def\paragraph{\@startsection{paragraph}{4}%
	\z@{0.3em}{-.5em}%
	{$\bullet$ \ \normalfont\itshape}}

\@addtoreset{equation}{section}
\makeatother

\newcommand{\xx}{{\boldsymbol{x}}}

\newcommand{\eps}{\varepsilon}

\renewcommand{\aa}{\alpha}

\theoremstyle{definition}

%
%

\newcommand\soutD{\bgroup\markoverwith
{\textcolor{DarkGreen}{\rule[.5ex]{2pt}{1pt}}}\ULon}

\makeatletter
\newcommand{\vast}{\bBigg@{3}}
\newcommand{\Vast}{\bBigg@{5}}
\makeatother

\begin{document}
%
\title[Spectral isoperimetric inequalities for Robin Laplacians]{Spectral isoperimetric inequalities for Robin Laplacians on 2-manifolds and unbounded cones}

\author{Magda Khalile} 
\address{M. Khalile: Institut f\"ur Analysis, Leibniz Universit\"at Hannover, Welfengarten 1, 30167Hannover, Germany.}
\email{magda.khalile@math.uni-hannover.de}
\author{Vladimir Lotoreichik}
\address{V. Lotoreichik: Department of Theoretical Physics, Nuclear Physics Institute, 
	Czech Academy of Sciences, 25068 \v Re\v z, Czech Republic.}
\email{lotoreichik@ujf.cas.cz}

\begin{abstract}
	We consider the problem of geometric optimization of the lowest eigenvalue for the Laplacian on a compact, 
	simply-connected two-dimensional
		 manifold with boundary 
	subject to an attractive Robin boundary condition. We prove that in the sub-class of 
	manifolds with the Gauss curvature bounded from above by a constant $K_\circ \ge 0$ and under the constraint of fixed perimeter, the geodesic disk of  constant curvature $K_\circ$ maximizes the lowest Robin eigenvalue. In the same geometric setting, it is proved that the spectral isoperimetric inequality holds for the lowest eigenvalue of the Dirichlet-to-Neumann operator. 
Finally, we adapt our methods to Robin Laplacians acting on unbounded three-dimensional cones to show that,  	
	 under a constraint of 
	fixed perimeter of the cross-section, the lowest Robin  eigenvalue is maximized by
	the circular cone. 
\end{abstract}
%

\keywords{Robin Laplacian,
	2-manifold, unbounded conical domain, lowest eigenvalue, spectral isoperimetric inequality, parallel coordinates}
\subjclass[2010]{35P15 (primary); 58J50 (secondary)} 

\maketitle

\section{Introduction}
%
The long history of spectral isoperimetric inequalities was initiated by Lord Rayleigh in his book,  \emph{The theory of sound} \cite{R77}. He conjectured that among membranes of same area, that are fixed along their boundaries, the circular one has the lowest fundamental frequency. This conjecture was proved in any dimension by Faber \cite{F23} and Krahn \cite{K24} and is now known as the so-called \emph{Faber-Krahn inequality}. Later on, the isoperimetric inequality was extended to the case of Neumann boundary condition, and is sometimes referred to as the \emph{reversed} Faber-Krahn inequality as in that case the ball is the maximizer of the first non-zero Neumann eigenvalue \cite{S54, W56}. 
%
%

Spectral optimisation for the Robin Laplacian
is  however quite a new topic of growing interest; see 
the reviews~\cite{BFK17, L19} for a list of existing results and numerous challenging open problems, and the references therein. 
This could be explained as the majority of the classical methods developed for 
Dirichlet and Neumann boundary conditions
either fail or should be modified in the Robin case due to the necessity
to control the boundary term in the Rayleigh quotient.  
Moreover, the results and methods heavily depend on the sign of the parameter in the Robin
boundary condition. To be more precise, for $\cM\subset \mathbb R^d$ being a bounded domain with a sufficiently smooth boundary,  and $\beta \in\mathbb R$, let us consider the spectral problem
\begin{equation}
\label{robin}
\begin{aligned}
-\Delta u &= \lambda u, &\text{ on }& \cM, \\
\partial_\nu u + \beta u &= 0, &\text{ on }& \partial \cM,
\end{aligned}
\end{equation}
where $\partial_\nu  u $ stands for the outward normal derivative of the function $u$. 
%
%
It is proved,
for positive boundary parameters $\beta >0$, that the ball is the unique minimizer of the lowest Robin eigenvalue of \eqref{robin} 
under a fixed volume constraint~\cite{Bossel_1986, BG10, Daners_2006}.  
%
%
For negative parameters $\beta <0$, it was first conjectured in \cite{Bar77} that among the class of smooth bounded domains of same volume the ball should maximize the lowest eigenvalue of \eqref{robin}. However,
the conjecture was disproved in~\cite{FK15} in all space dimensions $d\ge 2$ by showing that the spherical shell yields a smaller lowest Robin eigenvalue than the ball of the same volume provided that the negative Robin parameter is sufficiently large by absolute value. 
This result was unexpected, because the ball is not an optimizer of the lowest eigenvalue of the Laplacian, and the topic attracted since then considerable attention \cite{AFK17,
	BFNT18, FL18, FNT15, FNT16, PP15, V19}. It also gave rise to a new conjecture
 in~\cite{AFK17} stating that in two dimensions,   
 the inequality should hold under area constraint in the class of simply-connected domains.
 %
 %
On the contrary, under a fixed perimeter constraint, some positive results were obtained. In particular, 
 it is proved in~\cite{AFK17} that the disk maximizes the lowest Robin eigenvalue of \eqref{robin}, for any $\beta<0$,  among all planar domains with fixed perimeter. 
 The result has then been extended to any space dimensions $d\geq 2$  in \cite{BFNT18},  among the narrower class of convex domains.
 The isoperimetric optimization problem was also studied for domains in the form $\mathcal D \backslash \overline{\mathcal M}$,  where $\mathcal D$ is either a bounded convex domain in $\mathbb R^d$ \cite{BFNT18}, or the whole Euclidean space ($\cD = \mathbb R^d$)~\cite{KL17a, KL17b}. 
 Finally, it is worth noticing that, while the majority of the above results were obtained in the setting of domains with smooth boundaries, only a few
 results are known for Lipschitz boundaries  \cite{BFNT18, FK18, KW18, L19}.

%
%
In the present paper, we are concerned with the optimization of the lowest Robin eigenvalue of \eqref{robin}, denoted by $\lambda_\beta(\mathcal M)$, when $\beta <0$, under a perimeter constraint.
%
%
We first go beyond the Euclidean setting and generalize the main result of~\cite{AFK17} to two-dimensional Riemannian manifolds (2-manifolds) with boundary. 
This new result can be viewed as the spectral analogue of the classical isoperimetric inequality on $2$-manifolds. To be more specific, consider $\cM$ a $C^\infty$-smooth, compact, simply-connected $2$-manifold with $C^2$-smooth boundary and assume that its Gauss curvature is bounded from above by a constant $K_\circ \ge 0$. Denote by $\cB^\circ$ the geodesic disk of constant curvature $K_\circ$ having the same perimeter as $\cM$. Then, the classical isoperimetric inequality states that, under the additional assumption $\lvert \cM\rvert$, $\lvert \cB^\circ\rvert \leq \frac{2\pi}{K_\circ}$ if $K_\circ >0$, there holds
\[
\lvert \cM\rvert \leq \lvert \cB^\circ\rvert,
\]
where $\lvert \cM\rvert$ and $\lvert \cB^\circ\rvert$ denote, respectively, the areas of $\cM$ and $\cB^\circ$. Under the same assumptions we are able to prove that
\begin{align}
\label{intro_main}
\lambda_\beta(\cM)\leq \lambda_\beta ( \cB^\circ), \quad \text{for all }\beta <0,
\end{align}
see Theorem~\ref{mainthm}.
The analysis of this
optimization problem relies on
abstract functional inequalities on $2$-manifolds, whose proofs employ the method of parallel coordinates~\cite{PW61,Savo_2001} and some geometric corollaries of the isoperimetric inequality~\cite{OS79}.
Related bounds on $\lm_\bb(\cM)$
in the spirit of the Hersh inequality are recently obtained in~\cite{S19}. 
For positive Robin parameters, $\beta >0$, a spectral isoperimetric inequality
for $\EvM$
in any dimension has  been proved very recently in \cite{CGH19} under certain constraints on the curvatures of both the manifold $\cM$ and its boundary $\partial\cM$.
Due to the close connection between the Robin Laplacian  and the Dirichlet-to-Neumann map, we are able to obtain as a consequence an analogue of~\eqref{intro_main} for the lowest eigenvalue of the Dirichlet-to-Neumann map, which is stated in Proposition~\ref{corDtN}. 


%
%

Second, we return back to the Euclidean setting and treat spectral optimization for the Robin Laplacian on
a special class of unbounded three-dimensional domains.
Namely, we consider an unbounded cone
$\Lambda_\frm\subset \mathbb R^3$ with $C^2$-smooth cross-section $\frm \subset \dS^2$, where $\dS^2$ denotes the unit sphere in $\mathbb R^3$. It has been proved, see e.g.~\cite{BP16}, that the Robin Laplacian is well defined on this class of unbounded domains. 
The main motivation to study Robin Laplacians on cones comes from the fact that these operators play an important role in the spectral asymptotic behaviour of Robin Laplacians acting on non-smooth Euclidean domains with conical singularities~\cite{BP16, LP08}. 
In this paper, we are interested in the optimization of the lowest eigenvalue of \eqref{robin}, when $\cM$ is replaced by $\Lambda_\frm$, under a fixed perimeter constraint on the cross-section $\frm$. We are able to prove that the circular cone yields the maximum of the lowest Robin eigenvalue on unbounded cones with simply-connected $C^2$-smooth cross-sections of fixed perimeter, see Theorem~\ref{thm_cone} for a precise statement. The technique employed in the proof strongly relies on the dilation invariance of the cones, which allows us to reduce the study to the level of cross-sections, and again apply  the method of parallel coordinates. 

\section{Main results}\label{sec:results}
%
\subsection{Two-dimensional manifolds}
\label{ssec:result1}
Our first main objective is to generalize the spectral isoperimetric
inequality~\cite[Thm. 2]{AFK17} for the lowest Robin eigenvalue
to a class of two-dimensional Riemannian manifolds with boundary.
Let $(\cM, g)$ be a ($C^\infty$-)smooth compact
two-dimensional, simply-connected  Riemannian manifold 
with $C^2$-smooth boundary $\p\cM$,  equipped with a smooth Riemannian metric $g$.
Such a manifold $\cM$ 
is diffeomorphic to the Euclidean disk and,
in particular, its Euler characteristic is equal to $1$.
We denote by $\dd V$ the area-element on $\cM$ while the arc-element of $\p\cM$ is denoted by $\sigma$.
With a slight abuse of notation, the area of $\cM$ and its perimeter will be respectively denoted by
\[
	\lvert \cM \rvert \coloneqq \int_\cM \dd V
	\and 
	\lvert \p \cM\rvert  \coloneqq \int_{\p \cM} \dd\sigma. 
\]
Let $K \colon \cM \arr\dR$ be the Gauss curvature
on $\cM$ and let the constant $K_\circ \ge 0$ be such that
\begin{equation}\label{eq:K}
	\sup_{x\in\cM} K(x) \le K_\circ.
\end{equation}
In the following we denote by $\cN_\circ$ the two-dimensional Riemannian manifold without boundary of constant non-negative curvature $K_\circ$. Note that, depending on $K_\circ$, the manifold $\cN_\circ$ can be identified with:  the sphere of radius $1/\sqrt{K_\circ}$, if $K_\circ > 0$ and the Euclidean plane, if $K_\circ = 0$.

As usual, $\nabla$ and $-\Delta$
stand, respectively, for the gradient and the positive Laplace-Beltrami operator on $\cM$, which are defined 
through the metric $g$ in local coordinate charts. 
The $L^2$-space $(L^2(\cM), (\cdot,\cdot)_{L^2(\cM)})$ 
and the $L^2$-based first-order Sobolev space $H^1(\cM) = \{u\in L^2(\cM)\colon |\nb u|\in L^2(\cM)\}$ are defined in the standard way. 
For the coupling constant $\beta<0$, we consider the sesquilinear form 
\[
	\frmM [u]
	:=
	\int_\cM \lvert \nabla u \rvert^2 \dd V
	+ 
	\beta \int_{\p\cM} \lvert u \rvert^2 \dd\sigma, 
	\qquad \dom\frmM:= H^1(\cM).
\]
It is semibounded and closed and hence defines a unique self-adjoint operator in $L^2(\cM)$, the Robin Laplacian on $\cM$, denoted by $\OpM$ and acting as 
\begin{align*}
	\OpM u &\coloneqq -\Delta u, \\ 
	\dom \OpM &\coloneqq
	\left\{ u \in H^1(\cM)\colon 
	\Delta u \in L^2(\cM), \p_\nu u|_{\p\cM} + 
	\beta u|_{\p\cM} = 0
	\right\},
\end{align*}
where $\nu$ denotes the outer unit normal to the boundary. It is worth to remark that the Neumann trace
$\p_\nu u|_{\p\cM}$ should be understood in a weak sense; \cf~\cite[Sec. 2]{AM12}.
Thanks to compactness of the embedding $H^1(\cM)\hookrightarrow L^2(\cM)$, the operator $\OpM$ has a compact resolvent and we denote by $\EvM$ its lowest eigenvalue. Applying the min-max principle to $\OpM$, see e.g. \cite[Sec.~XIII.1]{RS4}, its lowest eigenvalue  can be characterized by
\begin{align}\label{minmax_q}
	\EvM
	=
	\inf_{u \in H^1(\cM)\backslash \{0\}} 
	\frac{ \frmM[u] }{ \lVert u \rVert^2_{L^2(\cM)} }.
\end{align}
The variational characterisation~\eqref{minmax_q}
implies that $\EvM <0$ for all $\beta < 0$. 
 
Our main result in this setting is as follows.
\begin{thm}\label{mainthm}
	Let the manifolds $\cM$ and $\cN_\circ$
	 be as above, and let $\cB^\circ \subset \cN_\circ$ be a geodesic disk.
	Assume that $|\p \cM| = |\p\cB^\circ|$. 
	If $K_\circ >0$,  
	assume additionally that $\lvert \cM \rvert, \lvert \cB^\circ \rvert \leq \frac{2\pi}{K_\circ}$. Then there holds
	\[
		\EvM \leq	\EvB,
		\qquad \text{for all}\,\,\bb < 0. 
	\]
\end{thm}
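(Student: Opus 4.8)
The plan is to use the method of parallel coordinates (as in \cite{AFK17, PW61, Savo_2001}) to rewrite the Rayleigh quotient of $\OpM$ in terms of a one-dimensional weighted eigenvalue problem, and then to compare it with the analogous reduction for $\cB^\circ$. First I would take a normalized eigenfunction $u$ associated with $\EvM$; since $\EvM<0$, one expects $u$ can be chosen positive and smooth in the interior. For $t \ge 0$, let $\cM_t := \{x \in \cM : \dist(x, \p\cM) > t\}$ be the inner parallel set, set $\ell(t) := |\p\cM_t|$ and $a(t) := |\cM_t|$, so that $a'(t) = -\ell(t)$ for a.e.\ $t$. The key geometric input is a differential inequality for $\ell(t)$: by the first variation of arclength and the Gauss--Bonnet theorem applied to $\cM_t$, together with the curvature bound \eqref{eq:K} and the fact that $\cM$ is simply connected (Euler characteristic $1$), one gets $\ell'(t) \le -2\pi + K_\circ\, a(t) \le -2\pi + K_\circ\, a_\circ(t)$ for an appropriate comparison, while for the geodesic disk $\cB^\circ$ the corresponding quantity $\ell_\circ$ satisfies $\ell_\circ'(t) = -2\pi + K_\circ\, a_\circ(t)$ with equality. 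This is the place where the hypothesis $|\cM|, |\cB^\circ| \le 2\pi/K_\circ$ is needed, to keep the relevant radii/areas in the regime where the model comparison is monotone and the geodesic disk is well defined.

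Next I would perform the coarea change of variables: writing $F(t) := \int_{\{\dist(\cdot,\p\cM) = t\}} |u|^2 \,\dd\sigma$ (or rather introducing the radial average of $u^2$ along the parallel curves), the Rayleigh quotient
\[
\frmM[u] = \int_\cM |\nabla u|^2 \,\dd V + \beta\int_{\p\cM} |u|^2\,\dd\sigma
\]
is bounded below by its ``radial part'': dropping the tangential component of $|\nabla u|^2$ and using Cauchy--Schwarz along level sets, one reduces to a one-dimensional functional of a function $f = f(t)$ with weight $\ell(t)$, of the form $\int_0^{t_{\max}} |f'|^2 \ell \,\dd t + \beta \ell(0) |f(0)|^2$ divided by $\int_0^{t_{\max}} |f|^2 \ell\,\dd t$. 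The comparison $\ell(t) \le \ell_\circ(t)$ for all $t$ in the common range (which follows by integrating the differential inequality above from $t=0$, using $\ell(0) = \ell_\circ(0) = |\p\cM|$) then has to be fed into this one-dimensional quotient. Here one must be slightly careful because $\ell$ appears in both numerator and denominator with competing monotonicity effects; the standard trick (cf.\ \cite{AFK17}) is to change variables again so that the problem is parametrized by the area $a$ rather than by $t$, turning the weight into a cleaner form in which the monotone dependence on the isoperimetric deficit is transparent, and then to invoke a monotonicity/rearrangement lemma showing that replacing $\ell$ by the larger $\ell_\circ$ decreases the infimum of the functional.

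Finally, I would identify the resulting one-dimensional minimization problem with the exact radial reduction of $\lambda_\beta(\cB^\circ)$: for the geodesic disk the eigenfunction is genuinely radial, so its Rayleigh quotient equals the one-dimensional quotient with weight $\ell_\circ$ and no loss from discarding a tangential gradient. Chaining the inequalities gives
\[
\EvM = \frac{\frmM[u]}{\|u\|^2_{L^2(\cM)}} \ge \text{(1D quotient with weight }\ell) \ge \text{(1D quotient with weight }\ell_\circ) = \EvB,
\]
wait --- the desired inequality is $\EvM \le \EvB$, so in fact the roles are reversed: one tests the quadratic form of $\OpM$ with a suitable \emph{trial} function built from the radial eigenfunction $v$ of $\cB^\circ$ transplanted to $\cM$ via the parallel coordinate, i.e.\ $u(x) := v(\dist(x,\p\cM))$ composed with the inverse of the radial variable. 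Then $\frmM[u] \le$ (1D quotient for $v$ with weight $\ell$) and, using $\ell \le \ell_\circ$ in the right direction for this transplanted test function, one bounds this by $\EvB \cdot \|u\|^2_{L^2(\cM)}$, whence $\EvM \le \EvB$ by \eqref{minmax_q}. The main obstacle I anticipate is precisely controlling the sign of the effect of the inequality $\ell \le \ell_\circ$ on the transplanted Rayleigh quotient: because the ground state $v$ is monotone, the competing contributions of the weight in the gradient term, the $L^2$ term, and the boundary term must be organized (via a change of variables to the area coordinate and an explicit use of the differential inequality for $\ell$) so that the net effect goes the right way. Establishing the geometric differential inequality $\ell'(t) \le -2\pi + K_\circ a(t)$ with the correct constants, including the regularity of parallel sets and the measure-theoretic version of Gauss--Bonnet for $\cM_t$ with possibly non-smooth boundary, is the other technical point requiring care.
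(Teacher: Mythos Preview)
Your corrected approach --- transplanting the radial ground state $\psi$ of $\cB^\circ$ to $\cM$ via $u_\cM(x)=\psi(\rho_{\p\cM}(x))$, proving the perimeter comparison $L_\cM(t)\le L_{\cB^\circ}(t)$ from a Gauss--Bonnet differential inequality, and applying the min-max principle --- is exactly the paper's strategy. However, the step you flag as the ``main obstacle'' is not an obstacle at all, and your proposed cure (monotonicity of $v$, passage to the area variable, a rearrangement lemma) is unnecessary and would make the argument more complicated than it needs to be.

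The point you are missing is this. Using coarea and $|\nabla\rho_{\p\cM}|=1$ a.e., one gets simultaneously
\[
\|\nabla u_\cM\|_{L^2(\cM)}^2=\int_0^{R_\cM}|\psi'(t)|^2 L_\cM(t)\,\dd t,
\qquad
\|u_\cM\|_{L^2(\cM)}^2=\int_0^{R_\cM}|\psi(t)|^2 L_\cM(t)\,\dd t,
\]
and the boundary term equals $L|\psi(0)|^2$, which is \emph{identical} for $\cM$ and $\cB^\circ$. Since $L_\cM(t)\le L_{\cB^\circ}(t)$ and $R_\cM\le R_{\cB^\circ}$ (this in-radius comparison is a separate lemma you did not mention, but it follows from the isoperimetric inequality and a comparison of geodesic disks), \emph{both} the gradient norm and the $L^2$ norm of $u_\cM$ are bounded above by those of $u_{\cB^\circ}$. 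Hence $\frmM[u_\cM]\le\frmB[u_{\cB^\circ}]<0$. Now the sign does the work: a negative numerator divided by a smaller positive denominator only becomes more negative, so
\[
\EvM\le\frac{\frmM[u_\cM]}{\|u_\cM\|^2}\le\frac{\frmB[u_{\cB^\circ}]}{\|u_{\cB^\circ}\|^2}=\EvB.
\]
No monotonicity of $\psi$, no area-coordinate trick, no rearrangement is needed; the ``competing contributions'' you worry about do not compete once you observe that the numerator is negative. You should also make explicit the intermediate lemma $|\cM(t)|\le|\cB^\circ(t)|$ (needed to pass from $K_\circ|\cM(t)|$ to $K_\circ|\cB^\circ(t)|$ in the differential inequality) and the in-radius bound $R_\cM\le R_{\cB^\circ}$.
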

The proof of this result is given in Subsection~\ref{ssec:proof}. The min-max principle allows us to reduce the problem
to a comparison between the Rayleigh quotient of $\OpM$
evaluated on a proper test-function and 
the Rayleigh quotient of $\OpB$ evaluated on its ground-state. The key-step is the construction of the proper test-function which is made by a transplantation  from $\cB^\circ$ to $\cM$ of the ground-state of $\OpB$.
The comparison of the Rayleigh quotients relies on 
functional isoperimetric inequalities on $2$-manifolds
proved in Proposition~\ref{prop:main} using the method of parallel coordinates and the geometric isoperimetric inequality recalled in Subsection~\ref{prel}. The idea of this construction is reminiscent of that in~\cite{AFK17, BFNT18, FK15, PW61}. 

As mentioned above, this result was already known in the Euclidean case, i.e. $K \equiv K_\circ=0$, and proved in the more general setting of $C^2$-smooth domains without the assumption of simply-connectedness, see \cite[Thm. 2]{AFK17}. Within our method, the latter assumption cannot be dropped out as we need, for technical reasons,  to consider manifolds with Euler characteristic being equal to one. However, the necessity of assuming simply-connectedness in the non-Euclidean setting remains an open question. On the contrary,
the additional assumption $\lvert \cM\rvert, \lvert\cB^\circ\rvert\leq \frac{2\pi}{K_\circ}$ when $K_\circ >0$ is necessary. We construct
in Subsection~\ref{ssec:counter}
a counterexample if this assumption is dropped.
In order to characterize
the case of equality in Theorem~\ref{mainthm}, a different
technique is needed. We leave open the question of
whether $\EvM = \EvB$ implies $\cM = \cB^\circ$.

%

\subsection{Unbounded three-dimensional conical domains}\label{ssec:result2}
Our second main objective is to prove a spectral isoperimetric inequality for the lowest discrete Robin eigenvalue on unbounded three-dimensional cones.

Let $\frm \subset \dS^2$ be a simply-connected, $C^2$-smooth domain with $|\frm | < 2\pi$, where $\dS^2$ denotes the unit sphere in $\mathbb R^3$. We equip $\dS^2$ with the canonical Riemannian metric induced by the embedding $\dS^2\hookrightarrow\dR^3$.  
The associated unbounded three-dimensional cone $\Lambda_\frm \subset \dR^3$ and its boundary $\partial \Lambda_\frm$ are defined
in the spherical coordinates 
$(r,\varphi,\tt)\in \dR_+\tm [0,2\pi)\tm [-\frac{\pi}{2},\frac{\pi}{2}]$
as the Cartesian products
\begin{equation}\label{eq:conical}
\Lm_\frm \coloneqq \dR_+\tm\frm\and \partial \Lambda_\frm \coloneqq \mathbb R_+\times \partial \frm. 
\end{equation}	

For the boundary parameter $\bb < 0$, we consider the Robin Laplacian $\Opm$ acting in the Hilbert space 
$L^2(\Lm_\frm)$, and defined as the unique self-adjoint operator in $L^2(\Lm_\frm)$ associated with the closed and  semibounded sesquilinear form 
\begin{equation}
\label{eq:form_cone}
	\frmm [u]
	:= 
	\int_{\Lm_\frm}
	\lvert \nb u \rvert^2 \dd x 
	+
	\bb \int_{\p \Lm_\frm} \lvert u \rvert^2 \dd\sg, \qquad 
	\dom\frh_{\bb,\Lambda_\frm} := H^1(\Lm_\frm),
\end{equation}
where  $\dd\sg$ is the surface measure on $\p\Lm_\frm$, see e.g.~\cite[Lem. 5.2]{BP16}. Notice that, as the domain $\Lambda_\frm$ is unbounded, the essential spectrum of $\Opm$ is no longer empty, and we need, in particular, to add some assumptions on the cross-section $\frm$ to ensure the existence of the discrete spectrum. In the following, assume that
\begin{align}\label{hyp1}
	\lvert \p\frm \rvert < 2 \pi \and |\frm|< 2\pi.
\end{align}
For simple geometric reasons, the second assumption in~\eqref{hyp1} on $\frm$  excludes the possibility for $\dR^3\sm\Lm_\frm$ to be convex.
Hence, we conclude from~\cite[Thm. 1 and Cor. 8]{P16} that
\begin{equation*}
	  \sess(\Opm) = [-\beta^2,\infty)\and \#\sd(\Opm) = \infty.
\end{equation*}
Applying the min-max principle \cite[Sec.~XIII.1]{RS4} to the operator $\Opm$, the lowest eigenvalue  $\lambda_\beta(\Lambda_\frm)$ can be characterized by
\begin{align}\label{lowesteigen}
	\lambda_\beta(\Lambda_\frm)
	= 
	\inf_{u \in H^1(\Lm_\frm)\sm\{0\}}
	\frac{\frh_{\beta,\Lambda_\frm}[u]}{\lVert u \rVert^2_{L^2(\Lm_\frm)}}.
\end{align}

The following result states that the circular cone is a maximizer for the lowest Robin eigenvalue among all cones with fixed perimeter of the cross-section.
\begin{thm}\label{thm_cone}
	Let $\frm\subset\dS^2$ be a $C^2$-smooth, 
	simply-connected domain 
	and $\frb\subset\dS^2$ be a geodesic disk  
	such that $|\p \frm|= | \p \frb| < 2\pi$ and $|\frm|, |\frb| < 2\pi$. Then, one has
	\[
		\lm_\bb(\Lm_\frm) \leq \lm_\bb(\Lm_\frb),\qquad
		\text{for all}\,\,\bb <0.
	\]
\end{thm}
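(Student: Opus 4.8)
The plan is to exploit the dilation invariance of cones to reduce the three-dimensional problem to a spectral problem on the cross-section, and then apply the same parallel-coordinates machinery that underlies Theorem~\ref{mainthm}. First I would separate variables in spherical coordinates: writing $u(r,\omega)$ with $\omega\in\frm$ and using that $\Lambda_\frm=\dR_+\times\frm$ is a product, the Rayleigh quotient in~\eqref{lowesteigen} decomposes into a radial part and an angular part. Since the form $\frh_{\beta,\Lambda_\frm}$ is homogeneous of degree zero under the scaling $x\mapsto tx$ (the gradient term and the boundary term scale the same way), the natural ansatz is $u(r,\omega)=f(r)\,v(\omega)$, and after integrating out the radial variable one is led to a one-parameter family of quadratic forms on $\frm\subset\dS^2$ of Robin type, but with an extra curvature-like zeroth-order term coming from the angular Laplacian on the sphere. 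Concretely I expect $\lambda_\beta(\Lambda_\frm)$ to be expressible via an infimum over a spectral parameter $\mu$ of the lowest eigenvalue of a Robin-type operator on $\frm$ with boundary parameter depending on $\mu$ and an added potential; this is the standard reduction used in~\cite{BP16,P16}.

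The second step is to apply the functional isoperimetric inequalities of Proposition~\ref{prop:main} on the cross-section $\frm$. The sphere $\dS^2$ has constant Gauss curvature $K_\circ=1$, and the hypothesis $|\frm|,|\frb|<2\pi$ is exactly the constraint $|\frm|\le \tfrac{2\pi}{K_\circ}$ needed there; the assumption $|\p\frm|=|\p\frb|<2\pi$ fixes the perimeter and guarantees, via~\cite{P16}, the discrete eigenvalue exists. So for each fixed value of the spectral parameter $\mu$ entering the reduced cross-sectional problem, the relevant functional inequality on $2$-manifolds of curvature bounded by $1$ should give a comparison between the reduced form on $\frm$ and the reduced form on the geodesic disk $\frb$. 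The test-function is built by transplantation: take the ground state on $\Lambda_\frb$ (equivalently, the optimizer of the reduced cross-sectional problem on $\frb$), transplant its angular part from $\frb$ to $\frm$ using parallel coordinates measured from $\p\frm$, and keep the same radial profile. Comparing Rayleigh quotients term by term — the gradient term, the boundary term, and the new angular zeroth-order term — using the geometric isoperimetric inequality on $\dS^2$ and the monotonicity properties of parallel coordinates, one should obtain $\lambda_\beta(\Lambda_\frm)\le\lambda_\beta(\Lambda_\frb)$.

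The main obstacle I anticipate is handling the extra zeroth-order term that appears in the cross-sectional reduction: unlike in Theorem~\ref{mainthm}, the reduced quadratic form on $\frm$ is not simply a Robin form but carries a potential term (and a $\mu$-dependent coupling). One must check that the transplantation does not increase this term — i.e.\ that the relevant integral of $|v|^2$ against the weight is controlled correctly under the change of the area element induced by parallel coordinates — and that the infimum over $\mu$ is attained and behaves monotonically, so that the comparison at fixed $\mu$ upgrades to a comparison of the infima. A secondary technical point is that the transplanted function must lie in $H^1(\Lambda_\frm)$ and be an admissible competitor in~\eqref{lowesteigen}, including decay at infinity so that the eigenvalue (rather than the essential spectrum) is being tested; this follows once one knows the ground state of $\Lambda_\frb$ decays and the radial profile is preserved, but it needs to be stated carefully. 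Finally, I would invoke Proposition~\ref{prop:main} as a black box for the cross-sectional comparison exactly as in Subsection~\ref{ssec:proof}, so the bulk of the new work is the separation-of-variables reduction and the bookkeeping of the curvature term.
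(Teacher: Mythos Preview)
Your core strategy---transplant the ground-state of $\Lambda_\frb$ to $\Lambda_\frm$ via parallel coordinates on the cross-section and invoke Proposition~\ref{prop:main}---is exactly what the paper does. But you are overcomplicating the reduction step, and the ``main obstacle'' you anticipate is an artifact of that detour.

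The paper does \emph{not} separate variables into a product $f(r)v(\omega)$ and does not reduce to a $\mu$-parametrized family of cross-sectional Robin problems. Instead it uses that the ground-state of the circular cone is known explicitly (Proposition~\ref{prop:circular}): $u_\frb(\xx)=\exp(-2\pi x_3/L)$. This function is not a product in $(r,\omega)$, but it has the crucial property that for each fixed $r>0$ the slice $\omega\mapsto u_\frb(r,\omega)$ depends only on $\rho_{\partial\frb}(\omega)$, since $x_3=r\sin\theta$ and $\theta=\tfrac{\pi}{2}-R_\frb+\rho_{\partial\frb}$. So one writes $u_\frb(r,\omega)=\psi_r(\rho_{\partial\frb}(\omega))$ and defines the test function on $\Lambda_\frm$ slice-by-slice as $u_\frm(r,\omega):=\psi_r(\rho_{\partial\frm}(\omega))$. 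The full gradient splits as $|\partial_r u|^2+r^{-2}|\nabla_{\dS^2}u|^2$; for each fixed $r$, Proposition~\ref{prop:main}\,(i) controls the integral of $|\partial_r u_\frm|^2$ over $\frm$ (it is just a $\psi_r$-dependent function of $\rho_{\partial\frm}$) and Proposition~\ref{prop:main}\,(ii) controls $\int_\frm|\nabla_{\dS^2}u_\frm|^2$. Integrating in $r$ gives $\|\nabla u_\frm\|^2\le\|\nabla u_\frb\|^2$; items (i) and (iii) handle the $L^2$-norm and the boundary term. No extra potential, no $\mu$, no infimum to exchange.

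In short: drop the separation-of-variables ansatz and work directly with the explicit eigenfunction~\eqref{eigenpaircone}. Your worries about the zeroth-order term and about monotonicity in $\mu$ disappear, and the $H^1$-membership and decay of the transplanted function are immediate because $u_\frb$ is an explicit decaying exponential.
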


As the lowest eigenvalue of Robin Laplacians acting on circular cones can be explicitly computed, see Lemma~\ref{prop:circular},  we have the following reformulation of Theorem~\ref{thm_cone}.
\begin{cor}

For any $L \in (0,2\pi)$ and  all $\beta <0$, 
\[
\max_{\lvert \partial\frm\rvert= L} \lambda_\beta(\Lambda_\frm)= - \left (\frac{2\pi\bb}{L}\right)^2, 
\]
where the maximum is taken over all
$C^2$-smooth, simply-connected domains  $\frm\subset\dS^2$ satisfying, in addition, $|\frm| < 2\pi$.
\end{cor}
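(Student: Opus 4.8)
The plan is to read off the corollary from Theorem~\ref{thm_cone} together with the closed-form expression for the lowest Robin eigenvalue on circular cones supplied by Lemma~\ref{prop:circular}. Fix $L\in(0,2\pi)$ and $\bb<0$. The first step would be to identify, among all geodesic disks in $\dS^2$, the one that will realise the extremal value: I would take $\frb\subset\dS^2$ to be the geodesic disk whose geodesic radius $\theta$ lies in $(0,\tfrac{\pi}{2})$ and is fixed by $2\pi\sin\theta=L$. Since a geodesic disk of radius $\theta$ in $\dS^2$ has perimeter $2\pi\sin\theta$ and area $2\pi(1-\cos\theta)$, this choice gives $|\p\frb|=L<2\pi$ and $|\frb|=2\pi(1-\cos\theta)<2\pi$; being moreover $C^2$-smooth and simply-connected, $\frb$ is itself one of the competitors in the maximisation. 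Picking the radius in $(0,\tfrac{\pi}{2})$, rather than the complementary cap of radius $\pi-\theta$, is exactly what enforces the constraint $|\frb|<2\pi$.

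Next I would combine the two inequalities this produces. On one hand, since $\frb$ is admissible, $\sup_{|\p\frm|=L}\lm_\bb(\Lm_\frm)\ge\lm_\bb(\Lm_\frb)$. On the other hand, for any $C^2$-smooth, simply-connected $\frm\subset\dS^2$ with $|\p\frm|=L$ and $|\frm|<2\pi$ one has $|\p\frm|=|\p\frb|<2\pi$ and $|\frm|,|\frb|<2\pi$, so Theorem~\ref{thm_cone} applies and yields $\lm_\bb(\Lm_\frm)\le\lm_\bb(\Lm_\frb)$. Together these show that the supremum is attained at $\frm=\frb$ and equals $\lm_\bb(\Lm_\frb)$.

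Finally I would evaluate $\lm_\bb(\Lm_\frb)$ via Lemma~\ref{prop:circular}: the lowest Robin eigenvalue of the circular cone over a geodesic disk of radius $\theta$ equals $-\bb^2/\sin^2\theta$ (equivalently $-(2\pi\bb/|\p\frb|)^2$, using $|\p\frb|=2\pi\sin\theta$), and inserting $|\p\frb|=L$ turns this into $-(2\pi\bb/L)^2$, which is the asserted value. I do not expect any genuine analytic obstacle here, since both ingredients are already in hand; the only point deserving attention is the elementary bookkeeping in the first step, namely confirming that for every $L\in(0,2\pi)$ there does exist an admissible geodesic disk of perimeter $L$, and that it must be the \emph{small} spherical cap (radius in $(0,\tfrac{\pi}{2})$) so that the area constraint $|\frb|<2\pi$ holds and Lemma~\ref{prop:circular} is applicable.
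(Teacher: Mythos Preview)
Your proposal is correct and follows exactly the route the paper indicates: the paper does not spell out a proof but simply declares the corollary a ``reformulation of Theorem~\ref{thm_cone}'' via the explicit eigenvalue on circular cones, and your argument is the natural fleshing-out of that remark. The only cosmetic point is that Proposition~\ref{prop:circular} is stated for $\bb=-1$, so to obtain $-\bb^2/\sin^2\theta$ for general $\bb<0$ you implicitly use the dilation-invariance observation~\eqref{inv_dil}; this is harmless but worth making explicit in a written version.
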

The proof of Theorem~\ref{thm_cone} is given in Subsection~\ref{proof_cone}. 
The idea consists in reducing the problem to the same functional isoperimetric inequalities as in Proposition~\ref{prop:main}, but now on the level of  cross-sections. This reduction rests upon the invariance of the cones under dilations. Contrary to the case of $2$-manifolds,  the strict inequalities $|\p \frm|= | \p \frb| < 2\pi$ and $|\frm|, |\frb| < 2\pi$ are needed in the assumptions of Theorem~\ref{thm_cone}. Indeed, assuming that $| \p \frb| = 2\pi$ and $|\frb| = 2\pi$,  the problem becomes trivial as the associated cone $\Lambda_\frb$ is the half-space and one can see by separation of variables that the  spectrum of $\Opb$ is then purely essential.

This result is reminiscent of the one obtained in~\cite[Thm. 1.3]{EL17} in the setting of 
 $\delta$-interactions supported on conical surfaces.
However, the technique employed here 
 is significantly different from the one used in~\cite{EL17}.  In the latter paper, the spectral problem was first reformulated
as a boundary integral equation by means of the Birman-Schwinger-type principle. Exactly the same technique seems not to be applicable any more for the Robin Laplacian, as the required Green's function is not
explicitly known, to the best of our knowledge.

\section{A spectral isoperimetric inequality on 2-manifolds}\label{manifold}
%
\subsection{Geometric isoperimetric inequalities on 2-manifolds}\label{prel}
%
We stick to the  notation introduced in Subsection~\ref{ssec:result1} and first recall some well-known results.  
For $p,q \in \cM$, we denote by $d(p,q)$ the Riemannian distance between $p$ and $q$, namely the infimum of the lengths of all piecewise smooth curves between $p$ and $q$.  We denote by $\cB_\rho(p)$ the metric disk of radius $\rho >0$ and center $p\in \cM$, 
\[
\cB_\rho(p)\coloneqq \{ x \in \cM: d(p,x) < \rho\}. 
\]
Occasionally, we drop the center and simply write
$\cB_\rho$.
A curve $\gamma \in \cM$ will be called a \emph{geodesic} if it is a minimizing curve between two endpoints with respect to the Riemannian distance. 
For $p\in\cM$ we denote by $\exp_p$ the so-called exponential map defined on a neighborhood of the origin of the tangent space of $\cM$ at $p$ denoted by $T_p\cM$. For a sufficiently small neighborhood of the origin, the exponential map is a diffeomorphism onto its image. The injectivity radius of a point $p\in \cM$ is then defined by
\[
\inj(p)\coloneqq\sup \big\{ \eps > 0\colon {\exp_p}\vert_{B(0,\eps)} \text{ is a diffeomorphism onto its image}\big\},
\]
where $B(0,\eps)\subset T_p\cM$ is the metric disk centered at $0$ and having radius $\eps > 0$. We call \emph{geodesic disk} the image $\exp_p( B(0,\rho))\subset \mathcal M$ for any $\rho < \inj(p)$. It is well known, see e.g. \cite[Cor. 6.11]{Lee97}, that  geodesic disks are in fact metric disks, i.e. the geodesic disk $\exp_p(B(0,\rho))$ is a  metric disk in $\cM$ of center $p$ and radius $\rho$. Recall that in a geodesic disk $\cB_\rho(p)\subset \cM$ there is a unique geodesic between $p$ and any $x\in \cB_\rho(p)$. 

Let us now discuss the classical isoperimetric inequality on two-dimensional manifolds and its corollaries, see \eg the review paper~\cite{OS79} for more details. The proof of the isoperimetric inequality for $2$-manifolds relies, in particular, on the comparison of the area of geodesic disks, respectively, in $\cM$ and $\cN_\circ$, the $2$-manifold with constant curvature $K_\circ \ge 0$. This result will be of interest for us in the next subsection and is stated in the following lemma.
 \begin{lem}\cite[Cor., p.~10]{OS79}\label{comp_disk}\label{lem:OS79}
Let $\cM$ be a smooth, compact, simply-connected $2$-manifold with $C^2$-smooth boundary, having the Gauss curvature $K$ bounded from above by a constant $K_\circ \ge 0$, and let $\cN_\circ$ be a $2$-manifold of constant curvature $K_\circ$. 
If $\cB_\rho\subset \cM$ denotes a geodesic disk of radius $\rho > 0$ and $\cB_\rho^\circ\subset\cN_{\circ}$ the geodesic disk of the same radius, then
\[
	\lvert \cB_\rho\rvert \geq \lvert \cB_\rho^\circ\rvert.
\]
\end{lem}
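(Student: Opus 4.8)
The plan is to prove Lemma~\ref{comp_disk} via the method of parallel coordinates, comparing the growth of the area function $\rho \mapsto |\cB_\rho|$ with its model counterpart. First I would fix a point $p \in \cM$ and a radius $\rho < \inj(p)$ so that the geodesic disk $\cB_\rho(p)$ is a genuine metric disk and the exponential map $\exp_p$ is a diffeomorphism from $B(0,\rho) \subset T_p\cM$ onto $\cB_\rho(p)$. In geodesic polar coordinates $(t,\theta)$ centred at $p$, the area element takes the form $\dd V = \sqrt{\mathfrak g}(t,\theta)\,\dd t\,\dd\theta$, where the metric coefficient $\sqrt{\mathfrak g}$ (sometimes written $J(t,\theta)$, the Jacobian of $\exp_p$) satisfies the Jacobi equation $\partial_t^2 \sqrt{\mathfrak g} + K\sqrt{\mathfrak g} = 0$ along each radial geodesic, with initial conditions $\sqrt{\mathfrak g}(0,\theta) = 0$ and $\partial_t\sqrt{\mathfrak g}(0,\theta) = 1$. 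In the model manifold $\cN_\circ$ of constant curvature $K_\circ$, the analogous coefficient is the explicit function $\mathsf{s}_{K_\circ}(t)$, equal to $t$ when $K_\circ = 0$ and to $\frac{1}{\sqrt{K_\circ}}\sin(\sqrt{K_\circ}\,t)$ when $K_\circ > 0$, solving $\mathsf{s}_{K_\circ}'' + K_\circ \mathsf{s}_{K_\circ} = 0$ with the same initial data.

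The next step is a Sturm-type comparison: since $K(x) \le K_\circ$ everywhere on $\cM$ by~\eqref{eq:K}, along each radial geodesic the coefficient $\sqrt{\mathfrak g}(t,\theta)$ solves a Jacobi equation with potential no larger than $K_\circ$, hence dominates the model solution, $\sqrt{\mathfrak g}(t,\theta) \ge \mathsf{s}_{K_\circ}(t)$, for all $t$ in the relevant range (up to the first conjugate point, which in the model case is at distance $\pi/\sqrt{K_\circ}$; one checks that the hypotheses keep us below this). Integrating over $\theta \in [0,2\pi)$ and $t \in [0,\rho]$ then yields
\[
  |\cB_\rho(p)| = \int_0^{2\pi}\!\!\int_0^\rho \sqrt{\mathfrak g}(t,\theta)\,\dd t\,\dd\theta
  \ \ge\ \int_0^{2\pi}\!\!\int_0^\rho \mathsf{s}_{K_\circ}(t)\,\dd t\,\dd\theta
  = |\cB_\rho^\circ|,
\]
which is the claimed inequality. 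For $K_\circ = 0$ the right-hand side is just $\pi\rho^2$, the Euclidean value; for $K_\circ > 0$ it is the spherical-cap area $\frac{2\pi}{K_\circ}\bigl(1 - \cos(\sqrt{K_\circ}\,\rho)\bigr)$.

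I expect the main obstacle to be the geometric bookkeeping needed to justify that the comparison is valid \emph{globally} on the geodesic disk, not merely infinitesimally — that is, controlling what happens at and beyond cut points and conjugate points. On a general simply-connected $2$-manifold with boundary the injectivity radius of $p$ may be small, so $\cB_\rho(p)$ need not be covered by a single normal coordinate chart for the $\rho$ of interest; one must argue, using simple connectedness (Euler characteristic one), the absence of interior conjugate points forced by $K \le K_\circ$ together with the Rauch comparison theorem, and a careful treatment of the boundary, that the integral representation above still bounds $|\cB_\rho|$ from below. Since this is precisely the content of the cited corollary in~\cite{OS79}, I would either invoke it directly or reproduce the short argument there; the analytic heart — the Sturm comparison for the Jacobi equation — is routine, and it is the interface between the local ODE comparison and the global area integral that requires care. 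A secondary, minor point is verifying that the case $K_\circ > 0$ with the area constraints in the surrounding results keeps all radii below $\pi/\sqrt{K_\circ}$ so that $\mathsf{s}_{K_\circ}$ stays nonnegative and the comparison does not degenerate.
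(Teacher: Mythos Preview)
The paper does not supply its own proof of this lemma; it is simply quoted from~\cite[Cor., p.~10]{OS79} and used as a black box. Your argument via geodesic polar coordinates, the Jacobi equation for the area density, and Sturm comparison with the constant-curvature model is exactly the classical G\"unther--Bishop comparison, which is the standard route and is essentially what underlies the cited result in~\cite{OS79}.

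One remark: your closing concern about cut points is unnecessary here. By the paper's own definition a \emph{geodesic disk} is $\exp_p(B(0,\rho))$ with $\rho < \inj(p)$, so the exponential map is a diffeomorphism on the entire disk and no cut locus is encountered; in particular $\sqrt{\mathfrak g}(t,\theta) > 0$ for all $t \in (0,\rho]$, and the Sturm comparison $\sqrt{\mathfrak g}(t,\theta) \ge \mathsf{s}_{K_\circ}(t)$ holds on the whole interval without further global bookkeeping. The only genuine constraint is that when $K_\circ > 0$ one needs $\rho \le \pi/\sqrt{K_\circ}$ so that $\mathsf{s}_{K_\circ}$ remains nonnegative, but this is automatic once $\cB_\rho^\circ$ is itself a geodesic disk on the sphere.
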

The isoperimetric inequality on $2$-manifolds is as follows. 
\begin{thm}\label{isopman}\cite[Sec. B]{OS79}
Let $\cM$ be a smooth, compact, simply-connected $2$-manifold with $C^2$-smooth boundary, 
 having the Gauss curvature $K$ bounded from above by a constant $K_\circ \ge 0$. Then, 
the following inequality holds
\[
	\lvert \partial \cM\rvert^2 \geq 
	4\pi \lvert \cM\rvert - K_\circ\lvert \cM \rvert^2,
\]
in which the equality is attained if and only if $K\equiv K_\circ$ and $\cM$ is a geodesic disk.
\end{thm}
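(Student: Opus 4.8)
The plan is to prove the inequality by the classical method of \emph{inner parallel coordinates} (Bol, Fiala; see the exposition in~\cite{OS79}), and to read off the equality case from the points where the estimates below are saturated. For $t\ge 0$ set
\[
\cM_t \coloneqq \{x\in\cM : d(x,\p\cM) > t\}, \qquad R \coloneqq \sup_{x\in\cM} d(x,\p\cM),
\]
and put $A(t) \coloneqq |\cM_t|$ and $L(t) \coloneqq \mathcal H^1\!\left(\{x\in\cM : d(x,\p\cM)=t\}\right)$, so that $A(0)=|\cM|$ and $L(0)=|\p\cM|$. Two facts drive the argument. First, the coarea formula applied to the $1$-Lipschitz function $x\mapsto d(x,\p\cM)$ gives $A(t)=\int_t^R L(s)\,\dd s$, hence $A$ is locally Lipschitz, non-increasing, with $A'(t)=-L(t)$ for a.e.\ $t$, and $A(t)\to 0$ as $t\to R^-$. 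Second, for a.e.\ $t$ the set $\cM_t$ is a bounded domain whose boundary is the smooth level curve $\Gamma_t$; the first variation of arclength gives $L'(t)=-\int_{\Gamma_t}\kappa_g\,\dd\sigma$, where $\kappa_g$ is the geodesic curvature of $\Gamma_t$ taken with respect to the outer normal of $\cM_t$, while Gauss--Bonnet on $\cM_t$ reads $\int_{\cM_t}K\,\dd V + \int_{\Gamma_t}\kappa_g\,\dd\sigma = 2\pi\,\chi(\cM_t)$. Since $\cM$ is simply connected one checks that every component of $\cM_t$ is simply connected (the set $\{d(\cdot,\p\cM)\le t\}$ retracts onto $\p\cM$ along the gradient flow of the distance), so $\chi(\cM_t)\ge 1$; together with $K\le K_\circ$ this yields $L'(t)\le K_\circ A(t)-2\pi$ for a.e.\ $t\in(0,R)$. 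At the negligible set of times where $\Gamma_t$ meets the cut locus of $\p\cM$ the function $L$ has only downward jumps, so this differential inequality persists in the distributional sense.

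Given these two facts, consider $\Phi(t)\coloneqq L(t)^2 - 4\pi A(t) + K_\circ A(t)^2$. Using $A'=-L$,
\[
\Phi'(t) = 2L(t)\bigl(L'(t) + 2\pi - K_\circ A(t)\bigr) \le 0 \qquad\text{for a.e.\ } t\in(0,R),
\]
and $\Phi$ also decreases across the downward jumps of $L$; hence $\Phi$ is non-increasing on $[0,R)$. Since $A(t)\to 0$ and $L(t)\ge 0$, one has $\lim_{t\to R^-}\Phi(t)=\lim_{t\to R^-}L(t)^2\ge 0$, and therefore $\Phi(0)\ge 0$, which is precisely $|\p\cM|^2\ge 4\pi|\cM|-K_\circ|\cM|^2$.

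For the equality case, if $\Phi(0)=0$ then monotonicity forces $\Phi\equiv 0$ on $[0,R)$, so $\Phi'\equiv 0$ a.e.; as $L(t)>0$ on $(0,R)$ this means $L'(t)=K_\circ A(t)-2\pi$ a.e., i.e.\ every inequality above is an equality. Equality in the Gauss--Bonnet estimate together with $K\le K_\circ$ forces $K\equiv K_\circ$ on all of $\cM$ and $\chi(\cM_t)\equiv 1$, while the absence of jumps of $L$ forces $\Gamma_t$ to remain a smooth curve until it collapses to a single point; thus the cut locus of $\p\cM$ is one point and $\cM=\cB_R$ is a geodesic disk. Conversely, a direct computation on the geodesic disk of radius $\rho$ in $\cN_\circ$ (circumference $\tfrac{2\pi}{\sqrt{K_\circ}}\sin(\sqrt{K_\circ}\rho)$, area $\tfrac{2\pi}{K_\circ}(1-\cos(\sqrt{K_\circ}\rho))$, and the Euclidean limit when $K_\circ=0$) shows that equality indeed holds there.

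The main obstacle is the rigorous justification of the parallel-coordinate identities when $\p\cM$ is merely $C^2$: one must analyse the structure of the cut locus of $\p\cM$, verify that $L$ is of bounded variation with only downward jumps, and control the exceptional times at which $\cM_t$ changes topology or $\Gamma_t$ ceases to be smooth. This is exactly the technical content established in~\cite{OS79}; modulo it, the remainder of the argument is the elementary ODE comparison for $\Phi$ above, and we may simply invoke~\cite[Sec.~B]{OS79} for the statement.
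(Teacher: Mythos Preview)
The paper does not give its own proof of this theorem; it is stated as a quotation from Osserman~\cite[Sec.~B]{OS79}. Your sketch reproduces exactly the classical Bol--Fiala inner-parallel argument that Osserman presents there, and it is correct: the monotonicity of $\Phi(t)=L(t)^2-4\pi A(t)+K_\circ A(t)^2$ follows from $A'=-L$ together with the Gauss--Bonnet estimate $L'\le K_\circ A-2\pi$, and the equality discussion is the standard one. You correctly flag the genuine technical cost (structure of the cut locus of $\p\cM$, bounded-variation and jump behaviour of $L$) and defer it to~\cite{OS79}; this is honest and appropriate. Note also that the very same machinery---Proposition~\ref{prop_savo}\,(iii), which is precisely your inequality $L'\le\int_{\cM_t}K-2\pi$, and the Hartman estimate in~\cite{hart} used in Lemma~\ref{ineq4}---is invoked elsewhere in the paper, so your argument is entirely in its spirit.
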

The study of the monotony of the quadratic polynomial $f(x)= 4\pi x - K_\circ x^2$ gives the following useful corollaries. 
\begin{cor}\label{corisop1}
Let $\cM$ be as in Theorem~\ref{isopman} and 
$\cB^\circ$ be a geodesic disk of constant curvature $K_\circ \ge 0$. 
	Assume that $\lvert \cM \rvert = \lvert \cB^\circ \rvert$. Then the inequality
	$\lvert \p\cM\rvert \geq \lvert \p\cB^\circ\rvert$
	holds, where the equality is attained if and only if $K\equiv K_\circ$ and $\cM$ is a geodesic disk.
\end{cor}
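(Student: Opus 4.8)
The plan is to deduce the statement directly from the sharp geometric isoperimetric inequality of Theorem~\ref{isopman}, exploiting crucially that this inequality holds with \emph{equality} for geodesic disks of constant curvature. Throughout I would write $f(x)\coloneqq 4\pi x - K_\circ x^2$ and set $A\coloneqq\lvert\cM\rvert = \lvert\cB^\circ\rvert$ for the common area.

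First I would apply Theorem~\ref{isopman} to $\cM$ itself, which satisfies all its hypotheses, to obtain $\lvert\p\cM\rvert^2 \geq f(\lvert\cM\rvert) = f(A)$. Next, observing that a geodesic disk $\cB^\circ\subset\cN_\circ$ is a manifold of exactly the type considered in Theorem~\ref{isopman}, with Gauss curvature identically equal to $K_\circ$, the rigidity clause of that theorem applies and forces equality: $\lvert\p\cB^\circ\rvert^2 = f(\lvert\cB^\circ\rvert) = f(A)$; in particular $f(A)\geq 0$. Chaining the two relations yields $\lvert\p\cM\rvert^2 \geq f(A) = \lvert\p\cB^\circ\rvert^2 \geq 0$, and since both perimeters are non-negative, taking square roots gives $\lvert\p\cM\rvert \geq \lvert\p\cB^\circ\rvert$, as claimed.

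For the case of equality I would argue as follows. If $\lvert\p\cM\rvert = \lvert\p\cB^\circ\rvert$, the chain above is forced to be an equality, so in particular $\lvert\p\cM\rvert^2 = f(\lvert\cM\rvert)$; the rigidity statement in Theorem~\ref{isopman} then yields $K\equiv K_\circ$ and that $\cM$ is a geodesic disk. Conversely, if $K\equiv K_\circ$ and $\cM$ is a geodesic disk, then $\cM$ and $\cB^\circ$ are geodesic disks of the same constant curvature $K_\circ$ and the same area $A$; since in $\cN_\circ$ the area of a geodesic disk is a strictly increasing function of its radius, $\cM$ and $\cB^\circ$ have equal radii and hence equal perimeters. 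I do not foresee any serious obstacle here: once Theorem~\ref{isopman} is granted, the inequality is essentially a one-line manipulation, and the only points that need a sentence of justification are that a constant-curvature geodesic disk saturates Theorem~\ref{isopman} and the elementary monotonicity of $\rho\mapsto\lvert\cB_\rho^\circ\rvert$ used for the converse. (The monotonicity of $f$ alluded to in the surrounding text plays no role for this particular corollary, since here $f$ is evaluated at one and the same point $A$ for both manifolds; it becomes relevant only for the companion statements in which the roles of area and perimeter are interchanged.)
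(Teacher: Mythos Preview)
Your argument is correct and follows the same route the paper indicates: both corollaries are stated as immediate consequences of Theorem~\ref{isopman} via the polynomial $f(x)=4\pi x - K_\circ x^2$, with no separate proof given. Your closing remark is in fact a sharpening of the paper's one-line justification: since $\lvert\cM\rvert=\lvert\cB^\circ\rvert$, the monotonicity of $f$ is indeed irrelevant here and only the equality case of Theorem~\ref{isopman} for $\cB^\circ$ is used; the monotonicity is genuinely needed only for Corollary~\ref{corisop2}.
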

\begin{cor}\label{corisop2}
Let $\cM$ be as in Theorem~\ref{isopman} and 
$\cB^\circ$ be a geodesic disk of constant curvature $K_\circ \ge 0$. 
	Assume that $\lvert \p \cM \rvert = \lvert \p \cB^\circ \rvert$. If $K_\circ > 0$,
	assume in addition
	 that  $\lvert \cM \rvert, \lvert \cB^\circ \rvert\leq \frac{2\pi}{K_\circ}$.
	Then the inequality $\lvert \cM \rvert \leq \lvert \cB ^\circ \rvert$ holds,
	where the equality is attained if and only if $K\equiv K_\circ$ and $\cM$ is a geodesic disk.
\end{cor}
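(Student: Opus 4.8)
The plan is to derive Corollary~\ref{corisop2} directly from the isoperimetric inequality of Theorem~\ref{isopman} by analysing the quadratic polynomial $f(x) := 4\pi x - K_\circ x^2$, exactly as suggested in the text. Write $a := |\cM|$, $a_\circ := |\cB^\circ|$, $L := |\p\cM|$ and $L_\circ := |\p\cB^\circ|$. Theorem~\ref{isopman} applied to $\cM$ gives $L^2 \ge f(a)$, while equality in the isoperimetric inequality for the geodesic disk $\cB^\circ$ in $\cN_\circ$ (which has constant curvature $K\equiv K_\circ$) gives $L_\circ^2 = f(a_\circ)$. By hypothesis $L = L_\circ$, so $f(a) \le f(a_\circ)$. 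The goal is to conclude $a \le a_\circ$.

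First I would treat the case $K_\circ = 0$: then $f(x) = 4\pi x$ is strictly increasing on $[0,\infty)$, so $f(a) \le f(a_\circ)$ immediately yields $a \le a_\circ$, and this is where no extra area constraint is needed. For $K_\circ > 0$, the parabola $f$ opens downward with vertex at $x = \tfrac{2\pi}{K_\circ}$, and is strictly increasing on the interval $[0, \tfrac{2\pi}{K_\circ}]$. The additional hypothesis $a, a_\circ \le \tfrac{2\pi}{K_\circ}$ places both area values in this interval of monotonicity, so again $f(a) \le f(a_\circ)$ forces $a \le a_\circ$. This is the whole argument for the inequality; the constraint $a,a_\circ\le 2\pi/K_\circ$ is precisely what rules out the ``wrong branch'' of the parabola where a larger area could correspond to a smaller perimeter.

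For the equality case, suppose $a = a_\circ$. Then $f(a) = f(a_\circ) = L_\circ^2 = L^2$, so $\cM$ achieves equality in Theorem~\ref{isopman}, and the rigidity statement there gives $K \equiv K_\circ$ and $\cM$ a geodesic disk. Conversely, if $K \equiv K_\circ$ and $\cM$ is a geodesic disk, then $\cM$ and $\cB^\circ$ are geodesic disks of the same constant curvature $K_\circ$ with the same perimeter $L = L_\circ$; by the explicit formula relating perimeter and area of geodesic disks in a space form (or simply by the fact that such a disk is determined up to isometry by its perimeter), $a = a_\circ$.

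There is essentially no serious obstacle here: the only point requiring a moment's care is to verify that $\cB^\circ$, being a geodesic disk in the constant-curvature manifold $\cN_\circ$, does saturate the isoperimetric inequality, i.e.\ that $L_\circ^2 = f(a_\circ)$ exactly — this follows from the equality clause of Theorem~\ref{isopman} applied with $\cM$ replaced by $\cB^\circ \subset \cN_\circ$ (noting $\cB^\circ$ is itself a smooth compact simply-connected $2$-manifold with $C^2$ boundary and curvature $\equiv K_\circ$). Once that identity is in hand, the corollary is just the elementary monotonicity of $f$ on $[0,2\pi/K_\circ]$.
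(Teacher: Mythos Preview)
Your proposal is correct and is precisely the argument the paper intends: the paper gives no detailed proof, only the remark that ``the study of the monotony of the quadratic polynomial $f(x)=4\pi x - K_\circ x^2$ gives the following useful corollaries,'' and your write-up is exactly that monotonicity argument, including the correct handling of the equality case via the rigidity statement in Theorem~\ref{isopman}.
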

A geodesic circle on the sphere encloses two geodesic disks. When $K_\circ >0$, the corollary above states that the isoperimetric inequality holds for the ``smaller'' one contained in the hemisphere as $\frac{2\pi}{K_\circ}$ is precisely the area of the hemisphere of radius $1/\sqrt{K_\circ}$. 

%
%

%

%
\subsection{Geometric and analytic properties of parallel coordinates}\label{ssec:parallel}
In what follows, $(\cM,g)$ denotes a smooth compact, simply-connected Riemannian $2$-manifold with $C^2$-smooth boundary. 
 The parameter $K_\circ$ is defined via~\eqref{eq:K}. Recall also that $\cB^\circ$ is a geodesic disk in the manifold $\cN_\circ$ of constant 
Gauss curvature
$K_\circ \geq 0$.
For the sake of brevity, we set $A_\circ := |\cB^\circ|$.
We also assume that the two conditions
below hold:
\begin{myenum}
	\item [\rm (a)] $	L\coloneqq |\p \cM| = |\p\cB^\circ| $. 
	\item [\rm (b)] $\max\{|\cM|, A_\circ\} \le \frac{2\pi}{K_\circ}$, if  $K_\circ > 0$.
\end{myenum}	
According to Corollary~\ref{corisop2} we immediately have
\begin{align}\label{isop1}
	| \cM | \leq A_\circ. 
\end{align}
For a point $x \in \cM$ we introduce $\distqq (x)$, the Riemannian distance between $x$ and the boundary $\p \cM$, \ie 
\[
	\distqq(x)\coloneqq \min_{y\in\p\cM}{d}(x, y).
\]
The function $\distqq$ is Lipschitz continuous and we have $\lvert \nabla \distqq \rvert= 1$ a.e. in $\cM$; \cf~\cite[Sec. 2]{Savo_2001}. The subset ${\rm Cut}(\cM)\subset\cM$, where the function $\distqq$ is not differentiable, has zero measure and is called the \emph{cut-locus}. 
Furthermore, we introduce the in-radius of $\cM$ by
\begin{equation}\label{eq:in_radius}
R_\cM\coloneqq \max_{x\in \cM} \distqq(x).
\end{equation}
The in-radius of $\cM$ is the radius of the largest metric disk that can be inscribed into $\cM$. In what follows, the respective disk in $\cM$ will be denoted by $\cB_{R_\cM}$. We point out that such a disk can be non-unique.  Due to the assumptions made on $\cM$ and $\cB^\circ$ we can compare their in-radii.
\begin{lem}\label{isop2}
Under the assumptions {\rm (a)} and {\rm (b)} there holds
\[
R_\cM\leq R_{\cB^\circ}. 
\]
\end{lem}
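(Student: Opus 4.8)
The plan is to compare the in-radii via the area monotonicity for geodesic disks (Lemma~\ref{lem:OS79}) and the area comparison coming from the isoperimetric inequality under a perimeter constraint (the inequality~\eqref{isop1}, i.e. Corollary~\ref{corisop2}). First I would let $\rho := R_\cM$ and take $\cB_\rho = \cB_\rho(p) \subset \cM$ a metric disk realizing the in-radius, so that $\cB_\rho$ is inscribed in $\cM$ and hence $|\cB_\rho| \le |\cM|$. Since $\cM$ is simply-connected with $C^2$-smooth boundary and its Gauss curvature is bounded above by $K_\circ$, Lemma~\ref{lem:OS79} applies and gives $|\cB_\rho| \ge |\cB_\rho^\circ|$, where $\cB_\rho^\circ \subset \cN_\circ$ is the geodesic disk of the same radius $\rho$ in the model space of constant curvature $K_\circ$. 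Combining these with~\eqref{isop1} yields
\[
	|\cB_\rho^\circ| \le |\cB_\rho| \le |\cM| \le A_\circ = |\cB^\circ|.
\]

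Next I would compare the two geodesic disks $\cB_\rho^\circ$ and $\cB^\circ$, both living in $\cN_\circ$. If $K_\circ = 0$, the area of a Euclidean geodesic disk is $\pi r^2$, which is strictly increasing in $r$, so $|\cB_\rho^\circ| \le |\cB^\circ|$ immediately forces $\rho \le R_{\cB^\circ}$ (here $R_{\cB^\circ}$ is simply the radius of $\cB^\circ$, as a geodesic disk is its own inscribed disk). If $K_\circ > 0$, the area of a spherical cap of geodesic radius $r$ on the sphere of radius $1/\sqrt{K_\circ}$ is $\frac{2\pi}{K_\circ}\bigl(1 - \cos(\sqrt{K_\circ}\,r)\bigr)$, which is strictly increasing in $r$ for $r \in (0, \pi/\sqrt{K_\circ})$; assumption~(b) guarantees $A_\circ = |\cB^\circ| \le \frac{2\pi}{K_\circ}$, i.e. $\cB^\circ$ sits inside a closed hemisphere, so its radius $R_{\cB^\circ}$ lies in $(0, \pi/\sqrt{K_\circ}]$. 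To invoke monotonicity I also need $\rho$ to lie in the same range: since $|\cB_\rho^\circ| \le A_\circ \le \frac{2\pi}{K_\circ}$, the formula for the cap area forces $1 - \cos(\sqrt{K_\circ}\,\rho) \le 1$, consistent with $\sqrt{K_\circ}\,\rho \le \pi$, and strict monotonicity on $(0,\pi/\sqrt{K_\circ}]$ then gives $\rho \le R_{\cB^\circ}$. (A small point to check: the radius $\rho = R_\cM$ is automatically below the injectivity radius of $p$ in $\cM$, because $\cB_\rho(p)$ is a metric disk with a genuine center, so Lemma~\ref{lem:OS79} is legitimately applicable; this is exactly the setting in which that corollary of~\cite{OS79} is stated.)

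The only mild subtlety — and the step I would flag as the main thing to get right — is the range issue for $\rho$ when $K_\circ > 0$: one must be sure that $R_\cM$ does not exceed $\pi/\sqrt{K_\circ}$, since beyond a hemisphere the cap-area function stops being injective and the argument would break. This is handled by the inclusion $\cB_\rho \subseteq \cM$ together with $|\cM| \le \frac{2\pi}{K_\circ}$ from assumption~(b), which bounds $|\cB_\rho^\circ| \le |\cB_\rho| \le |\cM| \le \frac{2\pi}{K_\circ}$ and hence confines $\rho$ to $(0,\pi/\sqrt{K_\circ}]$ where the area is strictly increasing. Everything else is a direct chain of inequalities, so the proof is short once this monotonicity window is secured.
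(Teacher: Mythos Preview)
Your overall strategy matches the paper's --- chain $|\cB_\rho^\circ| \le |\cB_\rho| \le |\cM| \le A_\circ$ via Lemma~\ref{lem:OS79} and~\eqref{isop1}, then use monotonicity of the cap area in $\cN_\circ$ --- but there is a genuine gap at the step where you invoke Lemma~\ref{lem:OS79}. That lemma compares \emph{geodesic} disks, whereas the inscribed disk $\cB_{R_\cM}(p)$ is a priori only a \emph{metric} disk. Your parenthetical assertion that ``$\rho = R_\cM$ is automatically below the injectivity radius of $p$ because $\cB_\rho(p)$ is a metric disk with a genuine center'' is not a valid justification: the implication in~\cite[Cor.~6.11]{Lee97} goes the other way (geodesic disks are metric disks), and nothing about ``having a center'' forces $\rho \le \inj(p)$. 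Moreover, your attempt to confine $\rho$ to $(0,\pi/\sqrt{K_\circ}]$ via the chain $|\cB_\rho^\circ| \le |\cB_\rho| \le |\cM| \le \tfrac{2\pi}{K_\circ}$ is circular, since the very first inequality already presupposes that Lemma~\ref{lem:OS79} applies to $\cB_\rho$.

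This injectivity-radius issue is in fact the main technical point of the lemma, and the paper handles it with separate external inputs in each case. For $K_\circ = 0$ the Cartan--Hadamard theorem (simply-connected, nonpositive curvature) guarantees that every metric disk is geodesic, so Lemma~\ref{lem:OS79} applies directly. For $K_\circ > 0$ the paper invokes an injectivity-radius estimate~\cite[Thm.~2.5.4]{BZ88}, which yields a dichotomy: either $R_\cM < \pi/\sqrt{K_\circ}$ and the inscribed metric disk is in fact geodesic, or $\cM$ contains a geodesic disk of radius $\pi/\sqrt{K_\circ}$, whence Lemma~\ref{lem:OS79} forces $|\cM| \ge 4\pi/K_\circ$, contradicting assumption~(b). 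Only after this dichotomy is resolved can one run the area-comparison argument you sketched. You should insert these two ingredients (Cartan--Hadamard and the Burago--Zalgaller estimate) in place of the unsupported parenthetical.
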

\begin{proof}
We will proceed by \emph{reductio ad absurdum} and suppose that $R_\cM > R_{\cB^\circ}$.
	
Let us first consider the case $K_\circ = 0$. The manifold $\cM$ has thus a non-positive Gauss curvature and the Cartan-Hadamard  theorem yields that any two points are connected by a unique geodesic, i.e. 
in that case all the metric disks are geodesic.  In particular $\cB_{R_\cM}\subset \cM$ is a geodesic disk.
Due to Lemma~\ref{comp_disk} we obtain the inequality $\lvert \cB_{R_{\cM}} \rvert \geq A_\circ$, which contradicts \eqref{isop1}. 

If $K_\circ >0$, we need to control locally the injectivity radius of $\cM$ to be able to conclude that $\cB_{R_\cM}$ is a geodesic disk. By \cite[Thm. 2.5.4]{BZ88}, two situations are allowed:
\begin{enumerate}
\item either $R_\cM < \frac{\pi}{\sqrt{K_\circ}}$ and for $x\in \cM$ such that $\distqq(x)=R_\cM$ there holds $\inj(x)= R_\cM$;
\item or $R_\cM \geq \frac{\pi}{\sqrt{K_\circ}}$ and then there exists a point $x\in \cM$ for which we simultaneously have $\distqq(x) \geq \frac{\pi}{\sqrt{K_\circ}}$ and $\inj(x) \geq \frac{\pi}{\sqrt{K_\circ}}$. 
\end{enumerate}
Assume that we are in the situation ($2$). Then, the $2$-manifold $\cM$ contains a geodesic disk of radius $\frac{\pi}{\sqrt{K_\circ}}$ which implies by Lemma~\ref{comp_disk} that 
\[
\lvert \cM\rvert \geq \lvert \cB_{\frac{\pi}{\sqrt{K_\circ}}}\rvert \geq \lvert \cB^\circ_{\frac{\pi}{\sqrt{K_\circ}}}\rvert= \frac{4 \pi}{K_\circ},
\]
where $\cB_{\frac{\pi}{\sqrt{K_\circ}}}\subset \cM$ and $\cB^\circ_{\frac{\pi}{\sqrt{K_\circ}}}\subset \cN_\circ$ are geodesic disks of radius $\frac{\pi}{\sqrt{K_\circ}}$. This contradicts the assumption (b). Hence, only the situation ($1$) is possible, i.e. the largest disk inscribed into $\cM$ is a geodesic disk and we get again a contradiction to inequality~\eqref{isop1} using Lemma~\ref{comp_disk}. 
\end{proof}
%
%
Let us consider the following auxiliary functions:
\begin{equation}\label{eq:LmAm}
\begin{aligned}
	L_\cM&\colon [0, R_\cM] \to \dR_+,& 
	\quad L_\cM(t)&\coloneqq 
	\big|	\{x\in\cM\colon \distqq(x)= t\}\big|, \\
	\areaqq &: [0, R_\cM]\to [0, | \cM| ],& \quad \areaqq(t)&\coloneqq 
	\big| \{x \in\cM\colon \distqq(x) < t\}\big|.
\end{aligned}
\end{equation}
Clearly, $L_\cM(0)= L$ and $\areaqq(R_\cM)= 
|\cM|$. 
The value $\areaqq(t)$ is simply the area of the sub-domain of $\cM$, which consists of the points located at the distance less than $t$ from its boundary $\p\cM$. We can define similarly the same functions associated to the geodesic disk $\cB^\circ$ on the manifold $\cN_{\circ}$ of constant curvature $K_\circ\ge 0$  and  we have the explicit formulae:
\begin{center}
{\small	
\renewcommand{\arraystretch}{1.8}
\begin{tabular}{|c|c|c|}
\hline
 & $K_\circ= 0$ & $K_\circ >0 $\\
 \hline
 $L_{\cB^\circ} (t) =$& $ 
 2\pi (R_{\cB^\circ} - t) $&$ 2\pi \frac{\sin(\sqrt{K_
 	\circ} (R_{\cB^\circ} -t))}{\sqrt{K_\circ}} $
 \\
 \hline
 $\areadisk(t) = $&$ 
 A_\circ -\pi (R_{\cB^\circ} -t) ^2 
 $&$ 
 A_\circ - 2\pi \frac{1- \cos( \sqrt{K_\circ} (R_{\cB^\circ} - t))}{K_\circ}$\\
 \hline
\end{tabular}}
\end{center}
It will also be convenient to introduce the subset
$\cM(t)\subset\cM$ defined by
\begin{equation}\label{eq:mt}
	\cM(t) 
	:=
	\{x \in\cM\colon \distqq(x) > t\},
\end{equation}
and we trivially have $\lvert \cM\rvert= \lvert \cM(t)\rvert + \areaqq(t)$. 
The analytic properties of the functions in~\eqref{eq:LmAm} have been studied in~\cite{bandle,fial,hart,Savo_2001} and we summarize some of them in the proposition below.
\begin{prop}{\cite[App. 1, Prop.~A.1]{Savo_2001}}, \cite[Chap. I, Sec. 3.6]{bandle}\label{prop_savo}
	Let the functions $\areaqq$ and $L_\cM$ be as in~\eqref{eq:LmAm}. Then the following hold:
	\begin{myenum}
		\item $\areaqq$ is Lipschitz continuous
		and increasing; $L_\cM$ is differentiable almost everywhere. 
		\item $\areaqq'(t)= L_\cM(t) > 0$
		for almost every $t\in [0,R_\cM]$.
		\item	$L'_\cM(t) \leq \displaystyle \int_{\cM (t)}  K(x) \dd V(x) - 2\pi$ for almost every $t\in [0,R_\cM]$.
	\end{myenum}
\end{prop}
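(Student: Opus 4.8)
The statement is classical; the plan is to prove it from two ingredients: the coarea formula applied to the distance-to-the-boundary function $\distqq$, for items (i) and (ii), and the first variation of the length of the inner parallel curves $\{\distqq=t\}$ together with the Gauss--Bonnet theorem on $\cM(t)$, for item (iii). Recall that $\distqq$ is Lipschitz with $|\nabla\distqq|=1$ almost everywhere, and that its singular set, the cut locus, is $\mathcal H^1$-negligible, being in dimension two a countable union of Lipschitz curves.

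For (i) and (ii), I would apply the coarea formula to $\distqq$ with integrand $\mathbbm{1}_{\{\distqq<t\}}$ and use $|\nabla\distqq|=1$ a.e.\ to get
\[
\areaqq(t)=\int_{\{\distqq<t\}}|\nabla\distqq|\,\dd V=\int_0^{t}\mathcal H^1\big(\{\distqq=s\}\big)\,\dd s=\int_0^{t}L_\cM(s)\,\dd s .
\]
Thus $\areaqq$ is nondecreasing (since $L_\cM\ge0$) and absolutely continuous, with $\areaqq'(t)=L_\cM(t)$ for a.e.\ $t$ by Lebesgue differentiation; it is Lipschitz once one knows $L_\cM\in L^\infty([0,R_\cM])$, and $L_\cM$ is differentiable a.e.\ once one knows it is of bounded variation. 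Both of these I would extract from the Fermi (boundary-normal) coordinate representation of $\cM$ minus its cut locus: with $\Phi(y,t)=\exp_y(t\,\nu(y))$, $y\in\partial\cM$, $0\le t<c(y)$, where $\nu$ is the inward unit normal and $c(y)$ the cut distance along it, the volume element is $\theta(y,t)\,\dd\sigma(y)\,\dd t$, where $\theta(y,\cdot)$ solves the Jacobi equation
\[
\partial_t^2\theta(y,t)+K\big(\Phi(y,t)\big)\,\theta(y,t)=0,\qquad \theta(y,0)=1,\quad \partial_t\theta(y,0)=-\kappa(y),
\]
$\kappa$ being the geodesic curvature of $\partial\cM$ relative to $\nu$. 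Since $K$ is bounded, Gronwall's lemma bounds $\theta$ and $\partial_t\theta$ on $[0,R_\cM]$ uniformly in $y$; and since the cut locus is $\mathcal H^1$-null,
\[
L_\cM(t)=\int_{\{y\in\partial\cM\,:\,c(y)>t\}}\theta(y,t)\,\dd\sigma(y)\qquad\text{for a.e.\ }t,
\]
which displays $L_\cM$ as an integral of uniformly bounded $\mathrm{BV}$ functions $t\mapsto\theta(y,t)\mathbbm{1}_{\{t<c(y)\}}$, hence bounded and $\mathrm{BV}$. Finally $L_\cM(t)>0$ for every $t<R_\cM$: the maximum of $\distqq$ is attained at a point $\Phi(y_0,R_\cM)$ with $c(y_0)=R_\cM$, so by continuity of $c$ the set $\{y:c(y)>t\}$ has positive arclength, and $\theta>0$ there.

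For (iii), a transversality argument shows that for a.e.\ $t\in(0,R_\cM)$ the set $\cM(t)$ is a finite union of topological disks, each bounded by finitely many $C^{1,1}$ arcs of $\{\distqq=t\}$ meeting at finitely many points of the cut locus at which $\cM(t)$ has convex corners, of exterior angles $\alpha_j(t)\in(0,\pi)$. On the smooth part of $\{\distqq=t\}$ the geodesic curvature $k_g$ relative to the inward normal of $\cM(t)$ equals $-\partial_t\log\theta$, and the first variation of its length under the unit-speed normal flow $t\mapsto\{\distqq=t\}$ satisfies
\[
L_\cM'(t)\le -\int_{\{\distqq=t\}}k_g\,\dd\mathcal H^1-\sum_j\alpha_j(t),
\]
the inequality accounting for the length absorbed wherever the front meets the cut locus (a corner of interior half-angle $\psi_j$ shortens the front at rate $2\cot\psi_j\ge\pi-2\psi_j=\alpha_j$). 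Applying the Gauss--Bonnet theorem to $\cM(t)$ gives
\[
\int_{\{\distqq=t\}}k_g\,\dd\mathcal H^1=2\pi\,\chi\big(\cM(t)\big)-\int_{\cM(t)}K\,\dd V-\sum_j\alpha_j(t),
\]
and substituting, the corner terms cancel and
\[
L_\cM'(t)\le \int_{\cM(t)}K\,\dd V-2\pi\,\chi\big(\cM(t)\big)\le \int_{\cM(t)}K\,\dd V-2\pi .
\]
For the last inequality one uses $\chi(\cM(t))\ge1$: every point of $\{\distqq\le t\}$ is joined to $\partial\cM$ within $\{\distqq\le t\}$ by its minimising geodesic to the boundary, so $\{\distqq\le t\}$ has no component missing $\partial\cM$; hence $\cM(t)$ has no holes, and since $\cM$ is a topological disk each component of the nonempty set $\cM(t)$ is itself a disk, so $\chi(\cM(t))$ is the positive number of components.

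The main obstacle is making the cut-locus bookkeeping rigorous: the semiconcavity of $\distqq$ and the splitting of $\Delta\distqq$ into a locally bounded part, given off the cut locus by the Jacobi-field expression, and a nonpositive singular part carried by the cut locus; the structure of $\{\distqq=t\}$ and of $\cM(t)$ for a.e.\ $t$ (finiteness of components and corners, rectifiability, validity of Gauss--Bonnet with corners); and, most delicately, the sharp first-variation inequality above — the crude estimate $L_\cM'(t)\le\int_{\{c(y)>t\}}\partial_t\theta(y,t)\,\dd\sigma(y)$ obtained by simply discarding the absorbed length is in general \emph{not} sufficient to reach the stated bound, so the corner contributions genuinely have to be controlled. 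All of this is carried out in the cited works; I would follow Hartman and Bandle--Savo \cite{hart,bandle,Savo_2001} for these points, adding only the elementary topological fact $\chi(\cM(t))\ge1$, which rests on the simple-connectedness of $\cM$.
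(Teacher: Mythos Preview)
The paper does not prove this proposition at all: it is stated as a citation from \cite[App.~1, Prop.~A.1]{Savo_2001} and \cite[Chap.~I, Sec.~3.6]{bandle}, with no argument given. Your sketch is therefore not competing with a proof in the paper but rather reproducing the classical Fiala--Hartman--Bandle--Savo argument that the paper invokes by reference, and you explicitly acknowledge this at the end.

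As a sketch of that classical argument your outline is sound. The coarea formula and Fermi-coordinate representation handle (i)--(ii) in the standard way, and for (iii) you correctly identify the two essential ingredients: the first variation of the length of the inner parallels, with the corner contributions coming from the cut locus, and Gauss--Bonnet on $\cM(t)$. You are also right to flag that the naive bound $L_\cM'(t)\le\int_{\{c(y)>t\}}\partial_t\theta\,\dd\sigma$ is insufficient and that the corner bookkeeping (the $2\cot\psi_j\ge\alpha_j$ estimate) is the delicate point; this is precisely what Hartman \cite{hart} and Fiala \cite{fial} make rigorous. One small caution: your sign conventions for $k_g$ (taken relative to the \emph{inward} normal of $\cM(t)$) and the Gauss--Bonnet identity you write (which tacitly uses the \emph{outward} convention) do not quite match as stated, so the cancellation of the $\sum_j\alpha_j$ terms does not go through literally as written; with a consistent choice the algebra closes correctly. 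Since you defer the technical details to the cited sources anyway, this is a presentational rather than a mathematical gap.
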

Note that $|\cM(t)| \le |\cB^\circ(t)|$ by~\eqref{isop1}
for $t = 0$ and this inequality turns out to persist
for $t > 0$. Namely, we have the following statement.
\begin{lem}\label{ineqmb}
	Let the sets $\cM(t)$ and $\cB^\circ(t)$ be defined as in~\eqref{eq:mt}. Then the following inequality 
	$|\cM(t)| \leq |\cB^\circ(t)|$ holds
	for almost every $t\in [0, R_\cM]$. 
\end{lem}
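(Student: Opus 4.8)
The plan is to recast the desired inequality as a one--dimensional comparison between the functions $m(t):=|\cM(t)|=|\cM|-\areaqq(t)$ and $n(t):=|\cB^\circ(t)|$, and to prove that $\psi:=n-m$ is nonnegative on the whole interval $[0,R_\cM]$; this clearly implies the claim (indeed with ``for all'' in place of ``for almost every''). Both $m$ and $n$ are continuous on $[0,R_\cM]$: the function $m$ is Lipschitz by Proposition~\ref{prop_savo}(i), and $n$ is smooth because the explicit expression for $\areadisk$ recorded in Subsection~\ref{ssec:parallel} is available on $[0,R_\cM]$, since $R_\cM\le R_{\cB^\circ}$ by Lemma~\ref{isop2}. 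Moreover $m(R_\cM)=0$, as $R_\cM=\max_{x\in\cM}\distqq(x)$. At the endpoints one has $\psi(0)=A_\circ-|\cM|\ge 0$ by~\eqref{isop1}, and $\psi(R_\cM)=n(R_\cM)-0=|\cB^\circ(R_\cM)|\ge 0$, being an area.

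Next I would derive the differential inequality satisfied by $\psi$. From the explicit expression for $n$, a direct computation gives $n''+K_\circ n=2\pi$ on $[0,R_\cM]$ (for $K_\circ=0$ this reads $n''=2\pi$). On the manifold side, Proposition~\ref{prop_savo}(ii) gives $m'=-L_\cM$ almost everywhere, while Proposition~\ref{prop_savo}(iii) together with~\eqref{eq:K} yields, almost everywhere on $(0,R_\cM)$,
\[
	L'_\cM(t)\ \le\ \int_{\cM(t)}K\,\dd V-2\pi\ \le\ K_\circ\,|\cM(t)|-2\pi\ =\ K_\circ\,m(t)-2\pi .
\]
Hence $m''+K_\circ m\ge 2\pi$ on $(0,R_\cM)$, read in the distributional sense, and subtracting the identity for $n$ we obtain $\psi''+K_\circ\psi\le 0$ on $(0,R_\cM)$.

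Finally I would invoke the maximum principle for the one--dimensional Schr\"odinger operator $-\frac{\dd^2}{\dd t^2}-K_\circ$ on $(0,R_\cM)$. This operator is positive: for $K_\circ=0$ this is automatic, and for $K_\circ>0$ positivity is precisely the bound $R_\cM<\pi/\sqrt{K_\circ}$, which is established in the proof of Lemma~\ref{isop2} (situation~(1) there). Since $\bigl(-\frac{\dd^2}{\dd t^2}-K_\circ\bigr)\psi\ge 0$ on $(0,R_\cM)$ and $\psi\ge 0$ at both endpoints, the maximum principle forces $\psi\ge 0$ on $[0,R_\cM]$, that is $|\cM(t)|\le|\cB^\circ(t)|$. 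I expect the one point requiring genuine care to be the passage from Proposition~\ref{prop_savo} to the clean supersolution property $\psi''+K_\circ\psi\le 0$: since $L_\cM$ is only asserted to be differentiable almost everywhere, the second--order inequality for $m$ has to be interpreted distributionally, with the pointwise bound on $L'_\cM$ controlling the distributional derivative of $L_\cM$ from above (here it helps that, by assumption~(b), $K_\circ m\le 2\pi$, so $L_\cM$ is in fact monotone non-increasing and hence of bounded variation). Everything after that is a routine ODE comparison.
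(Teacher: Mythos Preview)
Your route is genuinely different from the paper's and, apart from one regularity point, it works. The paper argues by symmetrization: it introduces the radius function $r(t)$ for which $|\cB^\circ_{r(t)}|=|\cM(t)|$, applies the isoperimetric inequality of Corollary~\ref{corisop1} to each level set $\cM(t)$ to obtain $r'(t)\le -1$, and integrates to reach $r(t)\le R_{\cB^\circ}-t$. You instead extract the second-order differential inequality $m''+K_\circ m\ge 2\pi$ from Proposition~\ref{prop_savo}\,(iii) and the curvature bound $K\le K_\circ$, match it against the exact identity $n''+K_\circ n=2\pi$, and close with the maximum principle for $-\frac{\dd^2}{\dd t^2}-K_\circ$ on $(0,R_\cM)$, the positivity of that operator being precisely the bound $R_\cM<\pi/\sqrt{K_\circ}$ established (independently of the contradiction hypothesis) in the proof of Lemma~\ref{isop2}. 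Your argument avoids invoking the full isoperimetric inequality on every $\cM(t)$ and yields the conclusion for \emph{all} $t$, at the price of needing the in-radius bound to make the one-dimensional operator coercive.

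The gap is in your justification of the distributional supersolution property. You deduce from $L'_\cM\le K_\circ m-2\pi\le 0$ almost everywhere that $L_\cM$ is monotone non-increasing, and then use the resulting BV structure to control the singular part of $(L_\cM)'$. But an a.e.\ non-positive pointwise derivative does not by itself force monotonicity: a function differentiable a.e.\ with derivative zero a.e.\ can still be strictly increasing. What is actually needed---that the distributional derivative of $L_\cM$ is bounded above, as a measure, by its a.e.\ pointwise derivative---is precisely the content of Hartman's inequality $L_\cM(t_2)-L_\cM(t_1)\le\int_{t_1}^{t_2}L'_\cM(s)\,\dd s$; see~\cite[Cor.~6.1]{hart}, which the paper itself invokes in the proof of Lemma~\ref{ineq4}. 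Once you cite this (rather than attempting to derive BV from the a.e.\ bound), the inequality $\psi''+K_\circ\psi\le 0$ holds in the sense of distributions on $(0,R_\cM)$, and your maximum-principle step is rigorous.
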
	

\begin{proof}
	Define the function $r\colon [0, R_\cM] \to \dR_+$ so that 
	\begin{align}\label{newradius}
		\lvert \cB_{r(t)}^\circ\rvert = \lvert \cM (t) \rvert,
	\end{align}
	where $\cB_{r(t)}^\circ$ is the geodesic disk of radius $r(t) \geq 0$ on the manifold $\cN_{\circ}$ of constant Gauss curvature $K_\circ$. Due to Proposition~\ref{prop_savo}\,(i), the function $r$ is Lipschitz continuous and by the co-area formula we have 
	\[
		\frac{\dd}{\dd t} \lvert \mathfrak \cB_{r(t)}^\circ\rvert = r'(t) \lvert \p \cB_{r(t)}^\circ \rvert,
	\]
	for almost every $t\in [0, R_\cM]$.  Thus, differentiating \eqref{newradius} with the help of Proposition~\ref{prop_savo}\,(ii) one has
	\[
		\lvert \p \cM(t)\rvert 
		= 
		- \frac{\dd}{\dd t} 
		\lvert \cM(t)\rvert = - r'(t) \lvert \p \cB_{r(t)}^\circ\rvert, \quad \text{a.e. in } [0, R_\cM].
	\]
	Corollary~\ref{corisop1} gives
	\[
		r'(t) = -\frac{\lvert \p\cM(t) \rvert}{\lvert \partial \cB_{r(t)}^\circ\rvert} \leq - 1,
	\]
	and integrating the above inequality one obtains
	\begin{align}\label{ineqradius}
		r(t) \leq r(0) - t.
	\end{align}
	By definition~\eqref{newradius} of the radius $r$ and \eqref{isop1} we have $\lvert \cB_{r(0)}^\circ\rvert = \lvert \cM(0)\rvert= \lvert \cM \rvert \leq A_\circ$, which implies, in particular, that $r(0) \leq R_{\cB^\circ}$. Replacing this inequality in~\eqref{ineqradius} we finally get that $r(t) \leq R_{\cB^\circ} - t$, where $R_{\cB^\circ} -t$ is simply the radius of $\cB^\circ(t)$, and this concludes the proof.
\end{proof}
The following lemma will be the main ingredient in the proof of the isoperimetric functional inequalities of Proposition~\ref{prop:main}. 
\begin{lem}\label{ineq4}
For almost every $t\in[0, R_\cM]$ there holds 
\[
	A'_\cM(t) \leq A'_{\cB^\circ} (t).
\]
\end{lem}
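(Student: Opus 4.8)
The plan is to compute both derivatives explicitly and reduce the inequality $A_\cM'(t)\le A_{\cB^\circ}'(t)$ to a comparison of the level-set lengths $L_\cM(t)\le L_{\cB^\circ}(t)$. First I would invoke Proposition~\ref{prop_savo}\,(ii), which gives $A_\cM'(t)=L_\cM(t)$ for a.e.\ $t\in[0,R_\cM]$; the geodesic disk $\cB^\circ$ is a particular case of this identity, so likewise $A_{\cB^\circ}'(t)=L_{\cB^\circ}(t)$ (this is also visible by differentiating the explicit formulae for $\areadisk(t)$ in the table). Hence it suffices to prove
\[
	L_\cM(t)\le L_{\cB^\circ}(t)\qquad\text{for a.e.\ }t\in[0,R_\cM].
\]

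To establish this I would argue by a differential-inequality (Gronwall-type) comparison, following the spirit of the parallel-coordinates method. Consider $\phi(t):=L_\cM(t)-L_{\cB^\circ}(t)$. At $t=0$ we have $\phi(0)=L-L_{\cB^\circ}(0)=0$ by assumption (a). For the derivative, Proposition~\ref{prop_savo}\,(iii) yields $L_\cM'(t)\le \int_{\cM(t)}K\,\dd V-2\pi\le K_\circ|\cM(t)|-2\pi$, using the curvature bound~\eqref{eq:K}. By Lemma~\ref{ineqmb}, $|\cM(t)|\le|\cB^\circ(t)|$, and since $K_\circ\ge 0$ this gives
\[
	L_\cM'(t)\le K_\circ|\cB^\circ(t)|-2\pi\qquad\text{for a.e.\ }t.
\]
On the other hand, the explicit formulae in the table show that $L_{\cB^\circ}$ satisfies the corresponding identity with equality: a direct differentiation gives $L_{\cB^\circ}'(t)=K_\circ|\cB^\circ(t)|-2\pi$ (check: for $K_\circ=0$, $L_{\cB^\circ}'(t)=-2\pi$ and $|\cB^\circ(t)|$-term vanishes; for $K_\circ>0$, $L_{\cB^\circ}'(t)=-2\pi\cos(\sqrt{K_\circ}(R_{\cB^\circ}-t))$, while $K_\circ|\cB^\circ(t)|-2\pi = K_\circ\big(A_\circ-2\pi\frac{1-\cos(\sqrt{K_\circ}(R_{\cB^\circ}-t))}{K_\circ}\big)-2\pi$, and using $K_\circ A_\circ - 2\pi = -2\pi\cos(\sqrt{K_\circ}R_{\cB^\circ})$, the hemisphere constraint ensuring $R_{\cB^\circ}\le\pi/\sqrt{K_\circ}$, one sees the two expressions agree). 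Subtracting, $\phi'(t)\le K_\circ(|\cB^\circ(t)|-|\cB^\circ(t)|)=0$ — wait, this is too crude; one actually needs to keep $|\cM(t)|$ and compare. The correct chain is $L_\cM'(t)\le K_\circ|\cM(t)|-2\pi\le K_\circ|\cB^\circ(t)|-2\pi = L_{\cB^\circ}'(t)$, whence $\phi'(t)\le 0$ a.e. Combined with $\phi(0)=0$ and absolute continuity of $L_\cM$ and $L_{\cB^\circ}$ (Proposition~\ref{prop_savo}\,(i) for $A_\cM$, and $L_\cM=A_\cM'$ is not a priori absolutely continuous, so here I would instead integrate the pointwise bound: $L_\cM(t)=L+\int_0^t L_\cM'(s)\,\dd s$ where absolute continuity of $L_\cM$ needs justification, or alternatively compare $A_\cM$ and $A_{\cB^\circ}$ directly at the level of second derivatives), we obtain $\phi(t)\le 0$, i.e.\ $L_\cM(t)\le L_{\cB^\circ}(t)$, which is the claim.

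The main obstacle I anticipate is the regularity bookkeeping needed to pass from the pointwise derivative inequality $L_\cM'\le L_{\cB^\circ}'$ a.e.\ to the integrated inequality $L_\cM\le L_{\cB^\circ}$: this requires knowing that $L_\cM$ (equivalently $A_\cM'$) is at least absolutely continuous, or that $A_\cM$ is $C^{1}$ with $A_\cM'$ of bounded variation, so that $A_\cM(t)=\int_0^t L_\cM$ and the fundamental theorem of calculus applies to $L_\cM$. This is exactly the content of the finer analysis in~\cite{Savo_2001,bandle}; I would cite that $A_\cM$ is $C^1$ and $L_\cM$ is of bounded variation (being a difference of monotone functions up to the curvature correction), so that $L_\cM(t)-L_\cM(0)\le\int_0^t L_\cM'(s)\,\dd s$ holds (with possible downward jumps at the cut-locus only helping the inequality). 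A secondary, purely computational point is the verification that $L_{\cB^\circ}$ realizes equality in Proposition~\ref{prop_savo}\,(iii), i.e.\ $L_{\cB^\circ}'(t)=K_\circ|\cB^\circ(t)|-2\pi$; this is a routine check from the table using the relation $K_\circ A_\circ-2\pi=-2\pi\cos(\sqrt{K_\circ}R_{\cB^\circ})$ together with assumption (b) guaranteeing $R_{\cB^\circ}\in[0,\pi/\sqrt{K_\circ}]$ so that no sign ambiguity arises.
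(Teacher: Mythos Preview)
Your proposal is correct and follows essentially the same route as the paper: reduce to $L_\cM(t)\le L_{\cB^\circ}(t)$ via Proposition~\ref{prop_savo}\,(ii), bound $L_\cM'$ using Proposition~\ref{prop_savo}\,(iii) together with $K\le K_\circ$ and Lemma~\ref{ineqmb}, and then check that the resulting upper bound coincides with $L_{\cB^\circ}(t)$. The regularity obstacle you flag is exactly the one the paper addresses, and it is resolved by citing Hartman~\cite[Cor.~6.1]{hart} for the inequality $L_\cM(t)-L_\cM(0)\le\int_0^t L_\cM'(s)\,\dd s$ (your intuition about ``downward jumps only helping'' is right); note also that in your parenthetical check you wrote $\areadisk(t)$ where you meant $|\cB^\circ(t)|=A_\circ-\areadisk(t)$, which makes the verification of $L_{\cB^\circ}'(t)=K_\circ|\cB^\circ(t)|-2\pi$ immediate without invoking the identity for $K_\circ A_\circ-2\pi$.
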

\begin{proof}
	The function $L_\cM$ defined in \eqref{eq:LmAm} may not be continuous. Nevertheless, it is proved 
	in~\cite[Cor. 6.1]{hart} that for any $t\in [0, R_\cM]$ there holds 
	\[
		L_\cM(t) - L_\cM(0) \leq \int_0 ^t L'_\cM(s) \dd s.
	\]
	Using Proposition~\ref{prop_savo}\,(iii) this gives
	\begin{align*}
		L_\cM(t) \leq L + \int_0^t 
		\int_{\cM(s)} K(x) \dd V(x) 
		\dd s -2\pi t.
	\end{align*}
	%
	Applying Lemma~\ref{ineqmb} with the inequality $K \leq K_\circ \in[0,\infty)$ one has
	\begin{align}\label{ineq4_1}
		L_\cM(t) \leq L + K_\circ \int_0^t \lvert \cB^\circ(s) \rvert \dd s - 2\pi t.
	\end{align}
	It is straightforward to notice that when $K_\circ = 0$ the right hand side of the above inequality is equal to $L_{\cB^\circ} (t)$. If $K_\circ >0$, using the explicit formula 
	\[
		\lvert \cB^\circ(s)\rvert = 2 \pi \frac{1- \cos( \sqrt{K_\circ} (R_{\cB^\circ} -s))} {K_\circ},
	\]
	we can compute the right-hand side of \eqref{ineq4_1} and get
	\begin{align*}
		L_\cM (t) & \le L + K_\circ \int_0^t \lvert \cB^\circ(s) \rvert \dd s - 2\pi t\\
		&=
		L + 2\pi \left ( t + \frac{\sin( \sqrt{K_\circ} (R_{\cB^\circ} - t)) -\sin( \sqrt {K_\circ} R_{\cB^\circ} )}{\sqrt{K_0}}\right) -2\pi t = L_{\cB^\circ}(t).
	\end{align*}
	Hence, for any $K_\circ\ge 0$ the inequality \eqref{ineq4_1} becomes
	\[
		L_\cM(t) \leq L_{\cB^\circ}(t), \quad \text{ for a.e. } t \in [0, R_\cM]. 
	\]
	The equality in Proposition~\ref{prop_savo}\,(ii) concludes the proof. 
\end{proof}

Let $\psi \in C^\infty([0,R_{\cB^\circ}])$ be an
arbitrary real-valued function.
Due to the properties of $\areaqq$ given in Proposition~\ref{prop_savo}, there exist functions $\phi_\cM\in C^{0,1}([0, |\cM|])$ and $\phi_{\cB^\circ}\in C^{0,1}([0,A_\circ])$\footnote{$C^{0,1}$ denotes
	the class of Lipschitz continuous functions.}  satisfying
\begin{align}\label{parallel1}
	\psi|_{[0, R_\cM]} = \phi_\cM \circ \areaqq
	\and \psi = \phi_{\cB^\circ} \circ A_{\cB^\circ}. 
\end{align}
Consider the test-functions 
\[
\begin{aligned}
	u_\cM & := \psi\circ\rho_{\p\cM} =  \phi_\cM \circ \areaqq \circ \rho_{\p\cM},\\
	u_{\cB^\circ} & := \psi\circ\rho_{\p\cB^\circ} =  \phi_{\mathcal B^\circ}\circ A_{\cB^\circ} \circ \rho_{\p\cB^\circ}.
\end{aligned}
\]
In the following proposition we show functional
isoperimetric inequalities, which are satisfied
by the above test-functions.
\begin{prop}\label{prop:main}	
	Let $\psi \in C^\infty([0,R_{\cB^\circ}])$ be an
	arbitrary real-valued function and let the functions $u_\cM$ and $u_{\cB^\circ}$ be associated to 
	$\psi$ as above.
	Then $u_\cM \in H^1(\cM)$,	$u_{\cB^\circ} \in H^1({\cB^\circ})$  and the following properties hold,	
\begin{myenum}
	\item $\| u_\cM\|_{L^2(\cM)} \le 
	\| u_{\cB^\circ}\|_{L^2(\cB^\circ)}$,
	\item $\|\nb u_\cM\|_{L^2(\cM;{\dC^2})} \le 
	\|\nb u_{\cB^\circ}\|_{L^2(\cB^\circ;{\dC^2})}$,
	\item $\|u_\cM|_{\p\cM}\|_{L^2(\p\cM)} = 
	\|u_{\cB^\circ}|_{\p\cB^\circ}\|_{L^2(\p\cB^\circ)}$.
\end{myenum}	
\end{prop}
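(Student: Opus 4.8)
The plan is to express all three quantities as one-dimensional integrals over the parameter $t$ via the co-area formula, and then reduce each claim to the pointwise comparison inequalities already established in the previous lemmas. First I would record the co-area representation: using $|\nabla\rho_{\p\cM}| = 1$ a.e., for any $F\in C^0(\dR)$ one has $\int_\cM F(\rho_{\p\cM}(x))\,\dd V(x) = \int_0^{R_\cM} F(t)\,L_\cM(t)\,\dd t$, and likewise $\int_{\cB^\circ} F(\rho_{\p\cB^\circ})\,\dd V = \int_0^{R_{\cB^\circ}} F(t)\,L_{\cB^\circ}(t)\,\dd t$. Applying this with $F = |\psi|^2$ and recalling $L_\cM = A_\cM'$, $L_{\cB^\circ} = A_{\cB^\circ}'$ a.e. (Proposition~\ref{prop_savo}(ii)), claim (i) becomes
\[
\int_0^{R_\cM} |\psi(t)|^2 A_\cM'(t)\,\dd t \;\le\; \int_0^{R_{\cB^\circ}} |\psi(t)|^2 A_{\cB^\circ}'(t)\,\dd t .
\]
Here the membership $u_\cM\in H^1(\cM)$, $u_{\cB^\circ}\in H^1(\cB^\circ)$ follows from the construction \eqref{parallel1}: $\phi_\cM, \phi_{\cB^\circ}$ are Lipschitz, $A_\cM, A_{\cB^\circ}$ are Lipschitz, and $\rho_{\p\cM}, \rho_{\p\cB^\circ}$ are Lipschitz with unit gradient a.e., so $u_\cM, u_{\cB^\circ}$ are Lipschitz, hence in $H^1$ by compactness.

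For (i), I would integrate by parts in $t$. Using $A_\cM(0) = 0 = A_{\cB^\circ}(0)$, $A_\cM(R_\cM) = |\cM|$, $A_{\cB^\circ}(R_{\cB^\circ}) = A_\circ$, and the extension $A_{\cB^\circ}(t) := A_\circ$ for $t \in [R_\cM, R_{\cB^\circ}]$ if $R_\cM < R_{\cB^\circ}$ (recall $R_\cM \le R_{\cB^\circ}$ by Lemma~\ref{isop2}), the inequality reduces to
\[
\int_0^{R_{\cB^\circ}} \big(|\psi|^2\big)'(t)\,\big(A_{\cB^\circ}(t) - A_\cM(t)\big)\,\dd t \;\le\; 0 ,
\]
together with the boundary term $|\psi(R_{\cB^\circ})|^2(A_\circ - |\cM|) \ge 0$ working in our favour — so one has to be slightly careful with signs and may prefer to integrate $|\psi|^2 A_\cM'$ by parts directly against $A_\cM$. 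The cleaner route is: write $|\psi(t)|^2 = |\psi(R_{\cB^\circ})|^2 - \int_t^{R_{\cB^\circ}} (|\psi|^2)'(s)\,\dd s$, substitute, use Fubini, and the problem collapses to showing $A_\cM(t) \le A_{\cB^\circ}(t)$ for a.e. $t$. But $A_\cM(t) = |\cM| - |\cM(t)|$ and $A_{\cB^\circ}(t) = A_\circ - |\cB^\circ(t)|$, so this is exactly $A_\circ - |\cM| \ge |\cB^\circ(t)| - |\cM(t)|$, which follows by combining \eqref{isop1} (i.e. $|\cM| \le A_\circ$ at $t=0$) with Lemma~\ref{ineqmb}; alternatively, and more directly, integrate the pointwise bound $A_\cM'(t) \le A_{\cB^\circ}'(t)$ of Lemma~\ref{ineq4} from $0$ to $t$ using $A_\cM(0) = A_{\cB^\circ}(0) = 0$. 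For (iii), the same co-area/trace identity at $t=0$ gives $\|u_\cM|_{\p\cM}\|_{L^2(\p\cM)}^2 = |\psi(0)|^2 L_\cM(0) = |\psi(0)|^2 L = |\psi(0)|^2 L_{\cB^\circ}(0) = \|u_{\cB^\circ}|_{\p\cB^\circ}\|_{L^2(\p\cB^\circ)}^2$, using assumption (a), so this is an immediate equality.

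Claim (ii) is the one requiring the chain rule: since $u_\cM = \psi\circ\rho_{\p\cM}$ with $|\nabla\rho_{\p\cM}| = 1$ a.e., one gets $|\nabla u_\cM(x)|^2 = |\psi'(\rho_{\p\cM}(x))|^2$ a.e., and the co-area formula yields $\|\nabla u_\cM\|_{L^2(\cM)}^2 = \int_0^{R_\cM} |\psi'(t)|^2 L_\cM(t)\,\dd t$, with the analogous identity on $\cB^\circ$. Thus (ii) reduces to
\[
\int_0^{R_\cM} |\psi'(t)|^2 L_\cM(t)\,\dd t \;\le\; \int_0^{R_{\cB^\circ}} |\psi'(t)|^2 L_{\cB^\circ}(t)\,\dd t ,
\]
and this is immediate from the pointwise bound $L_\cM(t) \le L_{\cB^\circ}(t)$ for a.e. $t$ — which was established inside the proof of Lemma~\ref{ineq4} — extended by $L_{\cB^\circ}(t) \ge 0 = L_\cM(t)$ on $(R_\cM, R_{\cB^\circ}]$. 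The main obstacle, such as it is, is bookkeeping rather than depth: one must justify the chain rule $\nabla(\psi\circ\rho_{\p\cM}) = \psi'(\rho_{\p\cM})\nabla\rho_{\p\cM}$ for the merely Lipschitz function $\rho_{\p\cM}$ (valid a.e. since $\psi$ is smooth and $\rho_{\p\cM}$ is Lipschitz, with the cut-locus being null), handle the possibly different in-radii $R_\cM \le R_{\cB^\circ}$ consistently in every integral by extending the domain of $\psi$-dependent integrands by zero or by the constant value as appropriate, and note that the case $|\psi'| = 0$ or $|\psi| = 0$ changes nothing. All the genuine geometric content has already been extracted into Lemmas~\ref{ineqmb}, \ref{ineq4} and \ref{isop2}, so the proof is essentially a co-area computation plus three invocations of those results.
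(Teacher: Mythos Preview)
Your treatment of (ii) and (iii) is correct and coincides with the paper's proof: both use the co-area formula to reduce to one-dimensional integrals and then invoke the pointwise bound $L_\cM(t) \le L_{\cB^\circ}(t)$ (equivalently $A_\cM'(t) \le A_{\cB^\circ}'(t)$) from Lemma~\ref{ineq4}, together with $R_\cM \le R_{\cB^\circ}$ from Lemma~\ref{isop2}. The $H^1$-membership argument via Lipschitz composition is also the paper's.

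For (i), however, your integration-by-parts/Fubini route has a gap. After substituting and applying Fubini you obtain
\[
|\psi(R_{\cB^\circ})|^2\big(A_\circ - |\cM|\big) \;-\; \int_0^{R_{\cB^\circ}} \big(|\psi|^2\big)'(s)\,\big(A_{\cB^\circ}(s) - A_\cM(s)\big)\,\dd s,
\]
and you claim this ``collapses to showing $A_\cM(t) \le A_{\cB^\circ}(t)$''. But the sign of the integral is governed by $(|\psi|^2)'$, which for an \emph{arbitrary} $\psi \in C^\infty([0,R_{\cB^\circ}])$ has no prescribed sign; knowing $A_{\cB^\circ}(s) - A_\cM(s) \ge 0$ is therefore not enough. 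The repair is immediate and is exactly what you already do for (ii): since $|\psi(t)|^2 \ge 0$, the pointwise inequality $A_\cM'(t) \le A_{\cB^\circ}'(t)$ a.e.\ on $[0,R_\cM]$ from Lemma~\ref{ineq4}, together with $A_{\cB^\circ}' \ge 0$ on $[R_\cM,R_{\cB^\circ}]$, gives directly
\[
\int_0^{R_\cM} |\psi(t)|^2\, A_\cM'(t)\,\dd t \;\le\; \int_0^{R_{\cB^\circ}} |\psi(t)|^2\, A_{\cB^\circ}'(t)\,\dd t.
\]
This is the paper's argument for (i); no integration by parts is needed.
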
	
\begin{proof}
	Lipschitz continuity of $\phi_\cM,\phi_{\cB^\circ}, \rho_{\p\cM}, \rho_{\p\cB^\circ}$ and 
	Proposition~\ref{prop_savo}\,(i) 
	imply that  $u_\cM \in H^1(\cM)$ and $u_{\cB^\circ}\in H^1(\cB^\circ)$.
	Employing the parallel coordinates together with the co-area formula, see~\cite[Eq. 30]{Savo_2001}
	for more details, we have
	\begin{align*}
	\|\nb u_\cM \|^2_{L^2(\cM;{\dC^2})}
	 =
	\int_0^{R_\cM} | \phi_\cM'(\areaqq(t))|^2 
	(\areaqq '(t))^3 \dd t.
	\end{align*}
	Using~\eqref{parallel1}, and applying further Lemma~\ref{isop2}, and Lemma~\ref{ineq4}, one obtains
	\begin{equation}\label{parallel2}
	\begin{aligned}
	\|\nb u_\cM \|^2_{L^2(\cM;{\dC^2})}
	& =
	\int_0^{R_\cM} | \psi'(t)|^2 \areaqq '(t) \dd t\\
	& 
	\le
	\int_0^{R_{\cB^\circ}} | \psi'(t)|^2 A_{\cB^\circ} '(t) \dd t\\
	& =
	\int_0^{R_{\cB^\circ}} | \phi'_{\cB^\circ}(A_{\cB^\circ}(t))|^2 (A_{\cB^\circ} '(t))^3 \dd t
	=
	\|\nb u_{\cB^\circ} \|^2_{L^2(\cB^\circ;{\dC^2})}.
	\end{aligned}
	\end{equation}
	Following the same steps we also get
	\begin{equation}\label{parallel3}
	\begin{aligned}
	\| u_\cM \|^2_{L^2(\cM)} 
	& =
	\int_0^{R_\cM} | \phi_\cM(\areaqq(t))|^2 
	\areaqq '(t) \dd t\\
	& =
	\int_0^{R_\cM} | \psi(t)|^2 \areaqq '(t) \dd t
	\le
	\int_0^{R_{\cB^\circ}} | \psi(t)|^2 A_{\cB^\circ} '(t) \dd t\\
	& =
	\int_0^{R_{\cB^\circ}} | \phi_{\cB^\circ}(A_{\cB^\circ}(t))|^2 A_{\cB^\circ}'(t) \dd t
	=
	\| u_{\cB^\circ}\|^2_{L^2(\cB^\circ)}.
	\end{aligned}
	\end{equation}
	Let us focus on the traces of $u_\cM$ and $u_{\cB^\circ}$. It is easy to see that for any $p\in\p \cM$
	and any $q\in \p\cB^\circ$ we have $u_\cM(p) = u_{\cB^\circ}(q) = \psi(0)$. Hence, we obtain
	\begin{align}\label{parallel4}
	\| u_\cM|_{\p\cM}\|^2_{L^2(\p \cM)} 
	=
	\| u_{\cB^\circ}|_{\p\cB^\circ} \|^2_{L^2(\p\cB^\circ)}
	=
	L | \psi(0)|^2. &\qedhere
	\end{align}
\end{proof}	
\subsection{Proof of Theorem~\ref{mainthm}}
\label{ssec:proof}
The proof relies on the min-max principle and consists in comparing the Rayleigh quotients of $\OpM$ and $\OpB$ with the aid of Proposition~\ref{prop:main}. 
For a geodesic disk $\cB^\circ$ of constant Gauss curvature $K_\circ$ the first Robin eigenvalue $\EvB$ is simple and also strictly negative, provided that $\beta <0$. Moreover, an associated eigenfunction, denoted by $u_{\cB^\circ}$, is  $C^\infty$-smooth, by standard elliptic regularity,  and radial with respect to the geodesic polar coordinates, with the pole being the center of $\cB^\circ$; \cf Appendix~\ref{app:ground-state}. 
%
Hence, there exists a real-valued $\psi \in C^\infty ([0, R_{\cB^\circ}])$ such that $u_{\cB^\circ}= \psi\circ \rho_{\p\cB^\circ}$. 
In the following we use the notation $u_\cM \coloneqq \psi \circ \rho_{\p\cM}$. 
Proposition~\ref{prop:main} implies
$\frmM [u_\cM] \leq \frmB[u_{\cB^\circ}] < 0$
and $\|u_\cM\|_{L^2(\cM)} \le \|u_{\cB^\circ}\|_{L^2(\cB^\circ)}$.
Hence, the min-max principle yields
\[
	\EvM 
	\leq 
	\frac{\frmM [u_\cM] }{\lVert u_\cM \rVert^2_{L^2(\cM)}} \le 
	 \frac{\frmB[u_{\cB^\circ}]}{\lVert u_{\cB^\circ} \rVert^2_{L^2(\cB^\circ)}} = \EvB. 
	 \]

\subsection{A counterexample based on weak-coupling}\label{ssec:counter}

In this subsection we show that the
additional assumption $|\cM|,|\cB| \le \frac{2\pi}{K_\circ}$ when $K_\circ > 0$ in Theorem~\ref{mainthm} is necessary. The idea is to find a counter-example on the sphere and to prove that, similarly to the geometric isoperimetric inequality, the spectral isoperimetric inequality holds only for domains contained in the hemisphere. 
To this aim we assume that $\cM  \subset \dS^2$.
In that case $K_\circ = 1$ and $\cN_{\circ}=\dS^2$. Let
$\cM\subset\dS^2$ be such that $|\cM| > 2\pi$
and $|\p\cM| < 2\pi$. Assume also that $\cM$ is not a geodesic disk. Furthermore, let $\cB^\circ\subset\dS^2$ be the  
geodesic disk with $L\coloneqq|\p\cB^\circ| = |\p\cM|$ and $|\cB^\circ| \le 2\pi$. 
Hence, we obviously have $|\cM| > |\cB^\circ|$. Since $|\p(\dS^2\sm\cM)| = |\p\cM|$ and $|\dS^2\sm\cM| < 2\pi$, Corollary~\ref{corisop2} yields that $|\dS^2\sm\cM| < |\cB^\circ|$ and hence also $|\cM| > |\dS^2\sm\cB^\circ|$. 

Before getting to the conclusion, let us recall some basic properties of the function
$\mathbb R_- \ni \beta \mapsto \lambda_\beta(\mathcal M)$
that can be found \eg in \cite{BFK17}. The eigenvalue $\lambda_\beta(\cM)$ is simple and analytic in the parameter $\beta$, and thus one can compute in the standard way its first derivative at $\bb = 0$,
\begin{equation}\label{eq:derivative_beta_0}
	\left(\frac{\dd}{\dd \beta} \lambda_\beta(\cM)\right)\bigg\vert_{\beta= 0}= \frac{\lvert \partial \cM\rvert }{\lvert \cM\rvert},
\end{equation}
\cf~\cite[Lem. 2.11]{AFK13} where the computation is made for $\beta >0$
in the Euclidean setting  and can be easily adapted to $\beta <0$ and lifted up to manifolds.
Using the formula~\eqref{eq:derivative_beta_0} we get
\[
\begin{aligned}
	\lim_{\bb\arr 0^-}\frac{\EvM - \EvB}{|\bb|} & = L\left(\frac{1}{|\cB^\circ|} - \frac{1}{|\cM|}\right) >0,\\	
	\lim_{\bb\arr 0^-}\frac{\EvM - \lm_\bb(\dS^2\sm\cB^\circ)}{|\bb|} & =  L\left(\frac{1}{|\dS^2\sm\cB^\circ|}- \frac{1}{|\cM|} \right) > 0.
\end{aligned}
\]
Hence, for  $\bb < 0$ with sufficiently small
absolute value, the inequalities
$\EvM > \EvB$ and $\EvM > \lm_\beta(\dS^2\sm\cB^\circ)$,
opposite to the one in Theorem~\ref{mainthm},
hold.

\subsection{An application to the Dirichlet-to-Neumann map}\label{proof_DtN} 
In this subsection, we prove a counterpart of the inequality in Theorem~\ref{mainthm} for the
lowest eigenvalue  of the Dirichlet-to-Neumann map. For $\lambda <0$, the Dirchlet-to-Neumann map, denoted by $\sfD_{\lambda,\cM}$, is defined in $L^2(\partial \cM)$ by 
\begin{align*}
\sfD_{\lambda, \cM} \varphi&= \partial_\nu u| _{\partial \cM}, \\
\dom \sfD_{\lambda,\cM}&\coloneqq \Big \{ \varphi \in L^2(\partial \cM): \exists u \in H^1(\cM) \text{ such that } -\Delta u = \lambda u \text{ with } u = \varphi \text{ on } \partial \cM \\
& \text{ and } \partial_\nu u|_{\partial \cM} \text{ exists in } L^2(\partial \cM)\Big\}, 
\end{align*}
see e.g. \cite{AM12}. 
Here again, the equation $-\Delta u = \lambda u$ and the Neumann trace $\partial_\nu u| _{\partial \cM}$ should be understood in the weak sense. It has been proved in~e.g. \cite[Sec. 2]{AM12} that the operator $\sfD_{\lambda,\cM}$ is self-adjoint in $L^2(\partial \cM)$, non-negative, and has a compact resolvent. We denote by $\sigma_\lm(\cM)$ its lowest eigenvalue. This eigenvalue is sometimes referred to as the lowest Steklov-type eigenvalue~\cite{FNT15}. Remark that $\sigma_\lambda (\mathcal M) >0$ when $\lambda <0$ while the first Steklov eigenvalue satisfies $\sigma_0(\mathcal M)= 0$, and the optimization problem is trivial in that case. 

The next lemma contains a  relation between the spectra of the Robin Laplacian and the Dirichlet-to-Neumann map and provides two of its consequences. 
\begin{lem}\label{lem:AM}\cite[Thm. 3.1, Prop. 3]{AM12}
Let $\cM$ be a smooth, compact, simply-connected $2$-manifold with $C^2$-smooth boundary. Then, the following properties hold:	
	\begin{myenum}
	\item $-\sigma\in\s(\DtNM)$ if and only if
		$\lm\in\s(\sfH_{\sigma,\cM})$, for all $\sigma < 0$.
		\item $\dR_-\ni \sigma \mapsto\lm_\sigma(\cM)$
		is strictly increasing.
		\item $\dR_-\ni \lm \mapsto\sigma_\lm(\cM)$
		is strictly decreasing.
\end{myenum}	
\end{lem}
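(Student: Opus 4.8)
The plan is to establish the three claims of Lemma~\ref{lem:AM} in sequence, exploiting the standard correspondence between Robin boundary conditions and the Dirichlet-to-Neumann map. For part (i), I would observe that the defining conditions are essentially a rewriting of the same eigenvalue problem: if $\lm\in\s(\sfH_{\sigma,\cM})$ with eigenfunction $u$, then $-\Delta u = \lm u$, $u$ has a nonzero Dirichlet trace $\varphi = u|_{\p\cM}$ (nonzero because $u\ne 0$ and $u$ cannot be a Dirichlet eigenfunction, as $\lm<0$ while the Dirichlet Laplacian is nonnegative), and $\p_\nu u|_{\p\cM} = -\sigma u|_{\p\cM} = -\sigma\varphi$, so $\sfD_{\lm,\cM}\varphi = -\sigma\varphi$, i.e.\ $-\sigma\in\s(\sfD_{\lm,\cM})$. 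Conversely, if $\sfD_{\lm,\cM}\varphi = -\sigma\varphi$ with $\varphi\ne 0$, the harmonic-type extension $u\in H^1(\cM)$ solving $-\Delta u = \lm u$, $u|_{\p\cM}=\varphi$ satisfies $\p_\nu u|_{\p\cM} + \sigma u|_{\p\cM} = -\sigma\varphi+\sigma\varphi = 0$, hence $u\in\dom\sfH_{\sigma,\cM}$ and $\sfH_{\sigma,\cM}u = \lm u$; here one needs that $\lm$ is not a Dirichlet eigenvalue so that the extension $u$ exists and is unique, which again holds since $\lm<0$. This is exactly the content of~\cite[Thm.~3.1]{AM12}, so I would simply cite it.

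For part (ii), I would argue via the variational characterisation~\eqref{minmax_q} combined with a strict monotonicity argument. For $\sigma_1<\sigma_2<0$ and any fixed $u\in H^1(\cM)\setminus\{0\}$, we have $\frh_{\sigma_1,\cM}[u] = \frh_{\sigma_2,\cM}[u] + (\sigma_1-\sigma_2)\|u|_{\p\cM}\|^2_{L^2(\p\cM)} \le \frh_{\sigma_2,\cM}[u]$, which gives $\lm_{\sigma_1}(\cM)\le\lm_{\sigma_2}(\cM)$ immediately. To upgrade to strict monotonicity, I would take $u$ to be a ground state of $\sfH_{\sigma_2,\cM}$; since the associated eigenfunction cannot vanish identically on $\p\cM$ (otherwise it would be a Neumann, in fact Dirichlet-and-Neumann, eigenfunction with negative eigenvalue, impossible), the term $(\sigma_1-\sigma_2)\|u|_{\p\cM}\|^2$ is strictly negative, so $\lm_{\sigma_1}(\cM)\le\frh_{\sigma_1,\cM}[u]/\|u\|^2 < \frh_{\sigma_2,\cM}[u]/\|u\|^2 = \lm_{\sigma_2}(\cM)$. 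This too is recorded in~\cite{AM12}, so a citation suffices, but the short argument is worth including for self-containedness.

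For part (iii), I would deduce the strict monotonicity of $\lm\mapsto\sigma_\lm(\cM)$ from parts (i) and (ii) by a purely order-theoretic inversion. Fix $\lm<0$; the function $\sigma\mapsto\lm_\sigma(\cM)$ is continuous (analytic, even), strictly increasing by (ii), tends to $-\infty$ as $\sigma\to-\infty$ and to the first Neumann eigenvalue $0$ as $\sigma\to 0^-$, hence it is a strictly increasing bijection from $\dR_-$ onto $(-\infty,0)$. By (i), $\sigma_\lm(\cM)$ is characterised as the unique $\sigma>0$ with $\lm_{-\sigma}(\cM)=\lm$, i.e.\ $\sigma_\lm(\cM) = -\Phi^{-1}(\lm)$ where $\Phi(\sigma):=\lm_\sigma(\cM)$ is the strictly increasing bijection above. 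Since $\Phi^{-1}$ is strictly increasing, $\lm\mapsto\sigma_\lm(\cM)$ is strictly decreasing. The main point requiring care is the claim that $\sigma_\lm(\cM)$ is indeed \emph{the} eigenvalue corresponding under (i) to the \emph{lowest} Robin eigenvalue $\lm_{-\sigma}(\cM)$ — one must check that the bottom of the Robin spectrum corresponds to the bottom of the Dirichlet-to-Neumann spectrum, which follows from the strict monotonicity of the whole correspondence on each spectral branch, and this is precisely why~\cite[Prop.~3]{AM12} is invoked. I would therefore present parts (ii) and (iii) with the short arguments above and cite~\cite{AM12} for the rigorous underpinning, flagging the branch-matching as the only subtle point.
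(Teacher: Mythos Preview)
The paper does not actually prove this lemma: it is stated with a citation to~\cite[Thm.~3.1, Prop.~3]{AM12} and used as a black box. Your proposal therefore goes beyond what the paper does, supplying a self-contained argument where the paper simply quotes the literature.

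Your sketch is correct. A small remark on part~(iii): your inversion argument is valid, and you rightly flag the branch-matching (that the lowest Robin eigenvalue corresponds to the lowest Dirichlet-to-Neumann eigenvalue) as the delicate step; note that the paper isolates exactly this matching as the separate Corollary~\ref{cor:AM}, which it proves from Lemma~\ref{lem:AM} rather than as part of it. An alternative, more direct route to~(iii) that sidesteps branch-matching is the variational characterisation
\[
\sigma_\lm(\cM)=\inf_{\substack{u\in H^1(\cM)\\ u|_{\p\cM}\ne 0}}\frac{\int_\cM\big(|\nb u|^2-\lm|u|^2\big)\dd V}{\int_{\p\cM}|u|^2\dd\sigma},
\]
valid for $\lm<0$ since the numerator is then coercive; strict monotonicity in $\lm$ is then immediate from the sign of the $-\lm\|u\|^2$ term. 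Either approach is fine, and both align with what~\cite{AM12} provides.
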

The next result is a direct consequence of the previous lemma and gives the link between $\lambda_\sigma(\cM)$ and $\sigma_\lambda(\cM)$. 
\begin{cor}\label{cor:AM}
	For any $\lm <0$ and $\sigma < 0$, there holds 	
	$\sigma_\lm(\cM) = -\sigma$
	if and only if $\lm_\sigma(\cM) = \lm$.
\end{cor}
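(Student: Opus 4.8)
The plan is to derive Corollary~\ref{cor:AM} directly from Lemma~\ref{lem:AM}\,(i) by tracking the roles of the two parameters. Fix $\lm < 0$ and $\sigma < 0$. The assertion is the equivalence of the statements $\sigma_\lm(\cM) = -\sigma$ and $\lm_\sigma(\cM) = \lm$, so I would prove the two implications separately, although they turn out to be essentially symmetric.

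First I would assume $\sigma_\lm(\cM) = -\sigma$. Since $\sigma_\lm(\cM)$ is by definition the lowest eigenvalue of $\DtNM$, we have $-\sigma \in \s(\DtNM)$, so Lemma~\ref{lem:AM}\,(i) (applied with the spectral parameter of the Robin operator taken to be our $\lm$ and the Robin coupling taken to be $\sigma < 0$) yields $\lm \in \s(\sfH_{\sigma,\cM})$. It remains to upgrade this membership to the statement that $\lm$ is the \emph{lowest} eigenvalue, i.e. $\lm = \lm_\sigma(\cM)$. Here I would argue by monotonicity: suppose, for contradiction, $\lm_\sigma(\cM) < \lm$. Since $\lm \in \s(\sfH_{\sigma,\cM})$ and $\lm < 0$, by Lemma~\ref{lem:AM}\,(i) again $-\lm_\sigma(\cM) \in \s(\DtNM)$ would force... more cleanly: set $\mu := \lm_\sigma(\cM)$. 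By Lemma~\ref{lem:AM}\,(i) with parameters $(\mu,\sigma)$ we get $-\mu \notin$ ... Actually the cleanest route uses part (iii): the map $\lm \mapsto \sigma_\lm(\cM)$ is strictly decreasing, hence injective. So it suffices to show $\sigma_{\lm_\sigma(\cM)}(\cM) = -\sigma$; combined with the hypothesis $\sigma_\lm(\cM) = -\sigma$ and injectivity of $\lm \mapsto \sigma_\lm(\cM)$, this gives $\lm = \lm_\sigma(\cM)$.

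So the real content reduces to one clean claim: \emph{for every $\sigma < 0$ one has $\sigma_{\lm_\sigma(\cM)}(\cM) = -\sigma$}, i.e. the maps $\sigma \mapsto \lm_\sigma(\cM)$ and $\lm \mapsto -\sigma_\lm(\cM)$ are mutually inverse bijections from $\dR_-$ onto $\dR_-$. To see this, fix $\sigma < 0$ and put $\lm := \lm_\sigma(\cM) < 0$ (negativity holds by the variational characterisation, exactly as for $\EvM$). Lemma~\ref{lem:AM}\,(i) gives $-\sigma \in \s(\DtNM)$, hence $\sigma_\lm(\cM) \le -\sigma$. Conversely, if $\sigma_\lm(\cM) = -\tau$ for some $0 < \tau \le |\sigma|$, i.e. $\tau \le |\sigma|$, then $-\tau \in \s(\DtNM)$, so by Lemma~\ref{lem:AM}\,(i) with coupling $-\tau < 0$ we get $\lm \in \s(\sfH_{-\tau,\cM})$, hence $\lm_{-\tau}(\cM) \le \lm = \lm_\sigma(\cM) = \lm_{-|\sigma|}(\cM)$; but $\dR_- \ni \sigma \mapsto \lm_\sigma(\cM)$ is strictly increasing by Lemma~\ref{lem:AM}\,(ii), so $-\tau \le -|\sigma|$, i.e. $\tau \ge |\sigma|$. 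Together with $\tau \le |\sigma|$ this forces $\tau = |\sigma|$, i.e. $\sigma_\lm(\cM) = -\sigma$, proving the claim. The reverse implication of the corollary ($\lm_\sigma(\cM) = \lm \implies \sigma_\lm(\cM) = -\sigma$) is now immediate from the claim with $\sigma$ in the role of the coupling: $\sigma_\lm(\cM) = \sigma_{\lm_\sigma(\cM)}(\cM) = -\sigma$.

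I do not expect any serious obstacle here; this is a bookkeeping argument once Lemma~\ref{lem:AM} is in hand. The one point requiring a little care is the bijectivity/mutual-inverse claim above: one must use \emph{both} the spectral correspondence in part~(i) \emph{and} the strict monotonicity in parts~(ii)--(iii), rather than (i) alone, since (i) only gives membership in the spectrum and not the extremality (lowest-eigenvalue) property. A secondary subtlety is ensuring all parameters stay in the negative half-line where Lemma~\ref{lem:AM} applies — this is automatic because $\lm_\sigma(\cM) < 0$ for $\sigma < 0$ (min-max) and $\sigma_\lm(\cM) > 0$ for $\lm < 0$ (so $-\sigma_\lm(\cM) < 0$), exactly as remarked in the text preceding the lemma.
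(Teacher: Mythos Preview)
Your argument is correct and matches the paper's approach: both use Lemma~\ref{lem:AM}\,(i) to obtain spectral membership and then the strict monotonicity in parts (ii)--(iii) to upgrade that membership to the lowest-eigenvalue statement via contradiction; the paper simply runs the two implications symmetrically rather than packaging one direction as your ``clean claim'' $\sigma_{\lm_\sigma(\cM)}(\cM) = -\sigma$. One sign slip to fix: your line ``$\sigma_\lm(\cM) = -\tau$ for some $0 < \tau \le |\sigma|$'' should read $\sigma_\lm(\cM) = \tau$, since $\sigma_\lm(\cM) > 0$ for $\lm < 0$ --- the remainder of your argument (applying Lemma~\ref{lem:AM}\,(i) with coupling $-\tau < 0$, etc.) is already written consistently with this corrected convention.
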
		
\begin{proof}
	Assume that $\sigma_\lm(\cM) = -\sigma$, then
	by Lemma~\ref{lem:AM}\,(i) we have	$\lm\in\s(\sfH_{\sigma,\cM})$
	and thus	$\lm_\sigma(\cM) \le \lm$. Suppose
	for a moment that 
	$\lm' := \lm_\sigma(\cM) < \lm$. Then, one has 
	$-\sigma\in\s(\sfD_{\lm',\cM})$ and hence
	$\sigma_{\lm'}(\cM) \le \sigma_\lambda(\cM)$. It contradicts the monotonicity 
	of $\lambda\mapsto \sigma_\lambda(\cM)$ stated in Lemma~\ref{lem:AM}\,(iii).
	
	Assume now that $\lm_\sigma(\cM) = \lm$, then
	by Lemma~\ref{lem:AM}\,(i) we have	$-\sigma\in\s(\sfD_{\lm,\cM})$
	and thus	$\sigma_\lm(\cM) \le -\sigma$. Suppose
	for a moment that 
	$\sigma' := -\sigma_\lm(\cM) > \sigma$. Then, one has $\lm\in\s(\sfH_{\sigma',\cM})$ and
	$\lm_{\sigma'}(\cM) \le \lambda_\sigma(\cM)$. It contradicts the monotonicity of $\sigma\mapsto \lambda_\sigma(\cM)$ stated in
	 Lemma~\ref{lem:AM}\,(ii).
\end{proof}	
The isoperimetric inequality for the Dirichlet-to-Neumann map on $2$-manifolds is as follows. 
\begin{prop}\label{corDtN}
	Let $\cM$ be a smooth, compact, simply-connected $2$-manifold with $C^2$-smooth boundary and Gauss curvature bounded by above by a constant $K_\circ \ge 0$, and $\mathcal N_\circ$ be the 2-manifold of constant curvature $K_\circ$. Let 
	$\cB^\circ\subset\cN_{\circ}$ be a geodesic disk.
	Assume that $|\p \cM| = |\p\cB^\circ|$. 
	If $K_\circ >0$,
	assume  additionally that $\lvert \cM \rvert, \lvert \cB^\circ \rvert \leq \frac{2\pi}{K_\circ}$. Then there holds
	\[
		\sigma_\lm(\cM) \leq \sigma_\lm(\cB^\circ) ,
		\qquad \text{for all}\,\,\lm < 0. 
	\]
\end{prop}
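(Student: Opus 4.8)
The plan is to deduce Proposition~\ref{corDtN} directly from Theorem~\ref{mainthm} by exploiting the correspondence between the Robin Laplacian and the Dirichlet-to-Neumann map encoded in Lemma~\ref{lem:AM} and Corollary~\ref{cor:AM}. Fix $\lambda < 0$ and set $\sigma := -\sigma_\lambda(\cB^\circ) < 0$, which is negative since $\sigma_\lambda(\cB^\circ) > 0$ for $\lambda < 0$. By Corollary~\ref{cor:AM} applied to the geodesic disk $\cB^\circ$, the equality $\sigma_\lambda(\cB^\circ) = -\sigma$ is equivalent to $\lambda_\sigma(\cB^\circ) = \lambda$. Now Theorem~\ref{mainthm}, whose hypotheses are exactly those assumed here, gives $\lambda_\sigma(\cM) \le \lambda_\sigma(\cB^\circ) = \lambda$ for this negative value of $\sigma$.

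The second step is to translate the inequality $\lambda_\sigma(\cM) \le \lambda$ back into a statement about $\sigma_\lambda(\cM)$. Since $\lambda_\sigma(\cM) \le \lambda < 0$ and, by Lemma~\ref{lem:AM}\,(ii), the map $\dR_- \ni \sigma' \mapsto \lambda_{\sigma'}(\cM)$ is strictly increasing and continuous (it is in fact analytic), there exists $\sigma' \in (-\infty, \sigma]$ with $\lambda_{\sigma'}(\cM) = \lambda$; concretely one may take $\sigma' := -\sigma_\lambda(\cM)$, which is negative because $\sigma_\lambda(\cM) > 0$. By Corollary~\ref{cor:AM} applied to $\cM$, the identity $\lambda_{\sigma'}(\cM) = \lambda$ is equivalent to $\sigma_\lambda(\cM) = -\sigma'$. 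Combining $-\sigma' = \sigma_\lambda(\cM)$ with the monotonicity-derived inequality $\sigma' \le \sigma$, i.e. $-\sigma' \ge -\sigma = \sigma_\lambda(\cB^\circ)$—wait, this goes the wrong way, so instead I invoke Lemma~\ref{lem:AM}\,(ii) directly: since $\sigma' \le \sigma$ and $\sigma'\mapsto\lambda_{\sigma'}(\cM)$ is strictly increasing, $\lambda_{\sigma'}(\cM) = \lambda = \lambda_\sigma(\cB^\circ)$ forces $\sigma' \le \sigma$ to be consistent; the clean way is to argue by contradiction as in the proof of Corollary~\ref{cor:AM}.

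Here is the cleaner route I would actually write: suppose, for contradiction, that $\sigma_\lambda(\cM) > \sigma_\lambda(\cB^\circ)$. Put $\sigma := -\sigma_\lambda(\cB^\circ) < 0$. Then $-\sigma \in \s(\sfD_{\lambda,\cB^\circ})$, so by Lemma~\ref{lem:AM}\,(i), $\lambda \in \s(\sfH_{\sigma,\cB^\circ})$ and hence $\lambda_\sigma(\cB^\circ) \le \lambda$; combined with Corollary~\ref{cor:AM} we in fact get $\lambda_\sigma(\cB^\circ) = \lambda$. On the other hand $-\sigma = \sigma_\lambda(\cB^\circ) < \sigma_\lambda(\cM)$ means $-\sigma \notin \s(\sfD_{\lambda,\cM})$ is not directly what we want; rather, by Lemma~\ref{lem:AM}\,(iii) the map $\lambda \mapsto \sigma_\lambda(\cM)$ is strictly decreasing, and by Theorem~\ref{mainthm}, $\lambda_\sigma(\cM) \le \lambda_\sigma(\cB^\circ) = \lambda$. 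If $\lambda_\sigma(\cM) = \lambda$, then by Corollary~\ref{cor:AM}, $\sigma_\lambda(\cM) = -\sigma = \sigma_\lambda(\cB^\circ)$, contradicting the assumption. If $\lambda_\sigma(\cM) < \lambda$, then $\lambda \in (\lambda_\sigma(\cM), 0)$ lies in the range of the strictly increasing continuous function $\sigma'\mapsto \lambda_{\sigma'}(\cM)$ on $(-\infty,\sigma]$, so there is $\sigma' \in (-\infty, \sigma)$ with $\lambda_{\sigma'}(\cM) = \lambda$; Corollary~\ref{cor:AM} then gives $\sigma_\lambda(\cM) = -\sigma' > -\sigma = \sigma_\lambda(\cB^\circ)$, which is consistent with our contradiction hypothesis but must be reconciled with monotonicity of $\sigma\mapsto\lambda_\sigma(\cM)$: since $\sigma' < \sigma$, Lemma~\ref{lem:AM}\,(ii) gives $\lambda = \lambda_{\sigma'}(\cM) < \lambda_\sigma(\cM) < \lambda$, a contradiction. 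Hence $\sigma_\lambda(\cM) \le \sigma_\lambda(\cB^\circ)$, as claimed.

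The main obstacle is purely bookkeeping: there is no new geometric or analytic input, only the need to chase the two monotone correspondences in the correct direction without sign errors, and to make sure that the negativity of all the relevant parameters ($\lambda < 0$, $\sigma_\lambda > 0$ so $-\sigma_\lambda < 0$) keeps us inside the domains where Lemma~\ref{lem:AM} and Corollary~\ref{cor:AM} apply. I would streamline the final write-up to the single contradiction argument above rather than the three-case version, using continuity and strict monotonicity of $\sigma'\mapsto\lambda_{\sigma'}(\cM)$ on $\dR_-$ to locate the crossing point, and then Corollary~\ref{cor:AM} to convert it back to the Steklov-type eigenvalue.
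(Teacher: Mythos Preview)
Your approach is exactly that of the paper, but the execution contains a direction error that makes the written argument invalid. The paper's proof is a three-line version of what you are trying to do: set $\sigma := -\sigma_\lambda(\cB^\circ)$ and $\sigma' := -\sigma_\lambda(\cM)$, so that Corollary~\ref{cor:AM} gives $\lambda_\sigma(\cB^\circ)=\lambda_{\sigma'}(\cM)=\lambda$; then Theorem~\ref{mainthm} yields $\lambda_\sigma(\cM)\le\lambda_\sigma(\cB^\circ)=\lambda=\lambda_{\sigma'}(\cM)$, and the strict monotonicity in Lemma~\ref{lem:AM}\,(ii) gives $\sigma\le\sigma'$, i.e.\ $\sigma_\lambda(\cM)\le\sigma_\lambda(\cB^\circ)$.

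Your first attempt in the second paragraph was essentially this, but you inverted the monotonicity: from $\lambda_\sigma(\cM)\le\lambda$ and the fact that $\sigma'\mapsto\lambda_{\sigma'}(\cM)$ is \emph{increasing}, the value $\sigma'$ with $\lambda_{\sigma'}(\cM)=\lambda$ must satisfy $\sigma'\ge\sigma$, not $\sigma'\in(-\infty,\sigma]$. With the correct sign, $-\sigma'\le-\sigma$ is exactly the desired inequality; nothing ``goes the wrong way''. The same inversion contaminates your ``cleaner route'': in the case $\lambda_\sigma(\cM)<\lambda$, the range of $\sigma'\mapsto\lambda_{\sigma'}(\cM)$ on $(-\infty,\sigma]$ is contained in $(-\infty,\lambda_\sigma(\cM)]$, so there is \emph{no} $\sigma'<\sigma$ with $\lambda_{\sigma'}(\cM)=\lambda$. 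The ``contradiction'' you then obtain is an artifact of this false existence claim, not of the hypothesis $\sigma_\lambda(\cM)>\sigma_\lambda(\cB^\circ)$. Drop the contradiction detour and write the direct monotonicity argument with the sign corrected.
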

\begin{proof}
	Let $\lm < 0$ be fixed and set 
	$\sigma := -\sigma_\lm(\cB^\circ)$,
	$\sigma' := -\sigma_\lm(\cM)$. Corollary~\ref{cor:AM} yields  $\lm_\sigma(\cB^\circ) = \lm_{\sigma'}(\cM)  = \lm$. Applying Theorem~\ref{mainthm}, we get
	\[
		\lm_\sigma(\cM) \le \lm_\sigma(\cB^\circ).
	\] 
	Hence, we conclude from Lemma~\ref{lem:AM}\,(ii) that
	$\sigma \leq \sigma'$.
\end{proof}	


\section{The Robin Laplacian on cones}\label{cone}
%
\subsection{Ground-state of Robin Laplacians on circular cones}\label{def_cone}
%
In what follows we stick to the notation introduced in Subsection~\ref{ssec:result2}. Recall that the cross-section $\frm\subset \dS^2$ satisfies the assumptions \eqref{hyp1} and that the associated cone $\Lambda_\frm\subset \mathbb R^3$ is defined in \eqref{eq:conical}. Upon having identified $\dS^2$ and the product $[0,2\pi)\tm [-\frac{\pi}{2},\frac{\pi}{2}]$, the domain $\Lm_\frm$
can alternatively be characterized
in the Cartesian coordinates $\xx\coloneqq (x_1,x_2,x_3)\in\dR^3$ as
\begin{equation*}\label{key}
	\Lm_\frm = \big\{ (r\cos\tt\cos\varphi, r\cos\tt\sin\varphi, r\sin\tt)\in\dR^3\colon r\in \dR_+,
	(\varphi,\tt)\in \frm\big\}.
\end{equation*}
The point $\xx_{\rm np}$ with the spherical coordinate
$(1,0,\frac{\pi}{2})$ is called the \emph{north pole}
of $\dS^2$.

By invariance of $\Lm_\frm$ under isotropic scaling 
one can show, using the change of variables 
$\xx\mapsto \bb\xx$ in the form \eqref{eq:form_cone}, that the operator $\Opm$ is unitarily equivalent to $\bb^2\sfH_{-1,\Lm_\frm}$ for all $\bb <0$.
Hence, it suffices to work out the case $\bb = -1$
and in what follows
we introduce the shorthand notation 
\begin{equation}\label{inv_dil}
	\Op := \sfH_{-1,\Lm_\frm},\qquad \form := \frh_{-1,\Lm_\frm}\and \lm(\Lm_\frm) :=
	\lm_{-1}(\Lm_\frm).
\end{equation}

Let $\frb\subset \dS^2$ be a geodesic
disk on $\dS^2$ (\ie spherical cap) such that $L\coloneqq | \p \frb|=\lvert \p \frm\rvert < 2\pi$ and $|\frb|  < 2\pi$. For the sake
of definiteness we always assume that $\frb$ is centered
at the north pole $\xx_{\rm np}\in\dS^2$.
 As shown in \cite[Sec. 5]{LP08}, the first eigenpair of $\sfH_{\Lm_\frb}$ can be computed explicitly. 
In order to keep the presentation self-contained, let us recall the proof of this simple result. We first need some basic properties of the one-dimensional self-adjoint Robin Laplacian defined in $L^2(\dR_+)$ as 
\begin{equation}\label{eq:op_B}
	\sfB_\bb f= - f'',\quad \dom\sfB_\bb\coloneqq
	\{ f\in H^2(\dR_+)\colon {f'(0) =\bb f(0)}\},\qquad
	\bb \in\dR.
\end{equation}
\begin{lem}\cite[Ex. 2.4]{LP08}
	Let the self-adjoint operator $\sfB_\bb$ be as in~\eqref{eq:op_B} with $\bb <0$.
	Then  
	\begin{myenum}
		\item $\sess(\sfB_\beta)=[0, \infty)$.
		\item $\sfB_\bb$ has a unique simple eigenvalue  ${\lm_1}(\sfB_\bb)\coloneqq-\beta^2$ and, in particular,	
		\begin{align}\label{rob1D}
			\int_{\dR_+} | f'(t)|^2 \dd t +\bb | f(0)|^2 
			\ge
			-\bb^2 \int_{\dR_+} | f(t)|^2 \dd t, \qquad \forall\, f \in H^1(\dR_+). 
		\end{align}
	\end{myenum}
\end{lem}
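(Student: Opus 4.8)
The plan is to establish the two claims separately, both by elementary means. For part (i), the essential spectrum of $\sfB_\bb$ is computed by a standard Weyl-sequence argument: since the potential term is concentrated at the single point $0$, the operator $\sfB_\bb$ is a rank-one (in the resolvent sense) perturbation of the Neumann Laplacian on $\dR_+$, whose spectrum is $[0,\infty)$ and is purely essential. Concretely, I would exhibit singular Weyl sequences supported far from the origin, built from translated and scaled bump functions $f_n(t) = n^{-1/2}\chi((t-n^2)/n)$ with $\chi\in C_c^\infty(\dR)$, to show $[0,\infty)\subset\sess(\sfB_\bb)$; the reverse inclusion follows since any negative spectrum must be discrete (the form domain embeds compactly into $L^2$ of any bounded set, and outside a large ball the form is bounded below by the Neumann form which is non-negative). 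Alternatively one may simply invoke the stability of essential spectrum under relatively compact perturbations relative to $\sfB_0$.

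For part (ii), I would solve the eigenvalue problem explicitly. Suppose $\lm < 0$ is an eigenvalue with eigenfunction $f\in\dom\sfB_\bb$; writing $\lm = -k^2$ with $k>0$, the equation $-f''=-k^2 f$ forces $f(t) = c e^{-kt}$ (the only $L^2(\dR_+)$ solution), and the boundary condition $f'(0)=\bb f(0)$ gives $-k = \bb$, i.e. $k = -\bb = |\bb|$ and $\lm = -\bb^2$. This shows there is exactly one negative eigenvalue, it is simple (the eigenspace is one-dimensional), and it equals $-\bb^2$. The inequality~\eqref{rob1D} is then just the statement that the Rayleigh quotient of $\sfB_\bb$ is bounded below by its lowest eigenvalue: by the min-max principle,
\[
	\inf_{0\neq f\in H^1(\dR_+)} \frac{\int_{\dR_+}|f'|^2\,\dd t + \bb|f(0)|^2}{\int_{\dR_+}|f|^2\,\dd t}
	= \lm_1(\sfB_\bb) = -\bb^2,
\]
which rearranges to~\eqref{rob1D} for all $f\in H^1(\dR_+)$, with equality on the span of $t\mapsto e^{\bb t}$.

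I do not expect any genuine obstacle here; this is a textbook computation, which is presumably why the authors cite~\cite[Ex. 2.4]{LP08} and include it only for self-containedness. The one point requiring a word of care is confirming there are no non-negative eigenvalues embedded in the essential spectrum: for $\lm = 0$ the only solution of $-f''=0$ that is bounded is affine, hence not in $L^2(\dR_+)$ unless trivial, and for $\lm = \tau^2 > 0$ the solutions are oscillatory ($e^{\pm i\tau t}$) and again not in $L^2(\dR_+)$, so no eigenvalue can occur there; combined with the semiboundedness of the form (which gives $\lm_1(\sfB_\bb)\geq -\bb^2$ directly, since $\int|f'|^2 + \bb|f(0)|^2 \geq -\bb^2\int|f|^2$ can also be proved by the pointwise estimate $|f(0)|^2 = -\int_{\dR_+}\frac{\dd}{\dd t}|f(t)|^2\,\dd t \leq 2\|f'\|_{L^2}\|f\|_{L^2}$ followed by Young's inequality with the right weight), the spectral picture is complete.
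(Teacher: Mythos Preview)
The paper does not supply its own proof of this lemma: it is stated with a citation to~\cite[Ex.~2.4]{LP08} and then used. Your argument is correct and is exactly the standard one that citation points to, so there is nothing to compare against here; your write-up would serve perfectly well as the omitted justification.
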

The following proposition provides a characterisation for the ground-state of $\sfH_{\Lm_\frb}$.
\begin{prop}\label{prop:circular}
Let $\frb\subset \dS^2$ be a geodesic disk of perimeter $L\coloneqq \lvert \partial \frb\rvert < 2\pi$ and centered at the north pole $\xx_{\rm np}\in \dS^2$. Then, the first eigenvalue of $\Op$ is given by $\lambda(\Lambda_\frb)= - \left (\displaystyle \frac{2\pi}{L}\right) ^2$ and 
\begin{align}
\label{eigenpaircone}
u_\frb(\xx)\coloneqq \exp\left(-\frac{2\pi x_3}{L}\right)
\end{align}	
is an associated eigenfunction. 
\end{prop}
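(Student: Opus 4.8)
The plan is to verify directly that the function $u_\frb(\xx) = \exp(-2\pi x_3/L)$ is an eigenfunction of $\Op = \sfH_{-1,\Lm_\frb}$ with eigenvalue $-(2\pi/L)^2$, and then separately argue that this eigenvalue is in fact the \emph{lowest} one, i.e. equals $\lm(\Lm_\frb)$. The first (verification) part is a routine computation. First I would record that for the geodesic disk $\frb$ centered at the north pole with $|\p\frb| = L < 2\pi$, the half-opening angle $\alpha$ of the cone $\Lm_\frb$ is determined by $L = 2\pi\sin\alpha$ (the boundary circle of $\frb$ at polar angle $\tt = \tfrac{\pi}{2}-\alpha$ has length $2\pi\cos\tt = 2\pi\sin\alpha$), and that $\Lm_\frb = \{\xx\in\dR^3\colon x_3 \ge (\cot\alpha)\sqrt{x_1^2+x_2^2}\}$. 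A short calculation gives $-\Delta u_\frb = -(2\pi/L)^2 u_\frb$ on $\Lm_\frb$, so the eigenvalue equation holds with $\lm = -(2\pi/L)^2$; it remains to check the Robin condition $\p_\nu u_\frb + (-1)u_\frb = 0$ on $\p\Lm_\frb$. On the lateral boundary of the cone the outward unit normal $\nu$ is orthogonal to the radial direction and lies in the plane spanned by the radial and the $x_3$ directions; a direct computation of $\nabla u_\frb = -(2\pi/L)u_\frb\, \e_3$ and of $\nu\cdot\e_3$ on $\p\Lm_\frb$ shows $\p_\nu u_\frb = -(2\pi/L)(\nu\cdot\e_3)u_\frb$, and since $\nu\cdot\e_3 = -\sin\alpha$ while $2\pi\sin\alpha = L$, one gets $\p_\nu u_\frb = u_\frb$ on $\p\Lm_\frb$, which is exactly the Robin condition for $\bb = -1$. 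One also checks $u_\frb \in H^1(\Lm_\frb)$ by direct integration in spherical coordinates: the exponential decay in $x_3$ (hence in $r$, away from the boundary) makes both $\|u_\frb\|_{L^2(\Lm_\frb)}$ and $\|\nabla u_\frb\|_{L^2(\Lm_\frb)}$ finite.

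Next I would show that $-(2\pi/L)^2$ is the \emph{bottom} of the spectrum, which by the structure $\sess(\Op) = [-1,\infty)$ (recall $\bb=-1$ here, from~\eqref{inv_dil} and the discussion after~\eqref{hyp1}) and since $-(2\pi/L)^2 < -1$ when $L < 2\pi$, amounts to showing $\lm(\Lm_\frb) \ge -(2\pi/L)^2$ via the min-max characterisation~\eqref{lowesteigen}. The natural approach is a one-dimensional reduction: write points of $\Lm_\frb$ in coordinates adapted to the cone, namely the signed distance to the lateral boundary together with coordinates along the boundary $\p\Lm_\frb = \dR_+\times\p\frb$. For a test function $u\in H^1(\Lm_\frb)$, one slices $\Lm_\frb$ by the foliation whose leaves are parametrised by this boundary-distance coordinate $t\ge 0$, applies the one-dimensional Robin inequality~\eqref{rob1D} on each ray transverse to $\p\Lm_\frb$, and integrates. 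Because the cone is \emph{flat} (it is a developable surface, locally isometric to the plane along generators), the transverse rays to the boundary are actual straight half-lines inside $\Lm_\frb$ starting perpendicularly at $\p\Lm_\frb$, so for each such ray the restriction of $u$ lies in $H^1(\dR_+)$ and~\eqref{rob1D} gives
\[
\int_0^\infty |\p_t u|^2\,\dd t + \bb|u(0)|^2 \ge -\bb^2\int_0^\infty |u|^2\,\dd t
\]
with $\bb = -2\pi/L$ being the effective one-dimensional Robin parameter seen after rescaling the transverse variable — here is where the relation $L = 2\pi\sin\alpha$ re-enters, since the Jacobian of the change to boundary-distance coordinates contributes the factor turning the coupling $-1$ on $\p\Lm_\frb$ into $-2\pi/L$. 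Summing (integrating) over all rays and discarding the non-negative tangential-gradient contribution yields $\form[u] \ge -(2\pi/L)^2\|u\|_{L^2(\Lm_\frb)}^2$, hence $\lm(\Lm_\frb)\ge -(2\pi/L)^2$; combined with the explicit eigenfunction this proves equality and that $u_\frb$ is a ground state.

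The main obstacle I anticipate is making the one-dimensional reduction rigorous, i.e. setting up the change of variables to ``boundary signed distance'' coordinates on the (unbounded, non-smooth-at-the-apex) cone and controlling the Jacobian and the cut-locus. One has to justify that almost every transverse ray reaches all the way to $t = +\infty$ inside $\Lm_\frb$ (true because the cone, being convex in the angular sense relevant here — $|\frm| < 2\pi$ forbids the complement from being convex but the transverse rays still escape to infinity), that Fubini applies so that $u$ restricted to a.e. ray is $H^1(\dR_+)$, and that the Jacobian factor is bounded below so the rescaling of the Robin parameter is uniform. A cleaner alternative that sidesteps these technicalities is to avoid proving the lower bound from scratch and instead invoke a known formula for the ground-state energy of the Robin Laplacian on a circular cone (e.g. from~\cite{LP08} or~\cite{BP16}), but since the statement of the proposition promises a self-contained proof ``recalled'' from~\cite{LP08}, I would present the slicing argument, being careful to phrase it in terms of the flat geometry of the cone where the transverse geodesics are genuine Euclidean segments. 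Alternatively — and this is probably the slickest route — one can exploit the dilation structure: separate variables as $u(r,\omega) = \sum_k f_k(r)\phi_k(\omega)$ with $\phi_k$ eigenfunctions of an angular Robin-type operator on the spherical cap $\frb$, reducing the problem to a family of one-dimensional Bessel-type operators on $\dR_+$; the lowest angular mode is constant and produces exactly the operator whose bottom is $-(2\pi/L)^2$, while higher modes only raise the energy. I would likely combine the explicit-eigenfunction verification with whichever of these lower-bound arguments is shortest given the tools already developed in the paper.
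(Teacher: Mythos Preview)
Your overall strategy---establish the lower bound $\form[u]\ge -(2\pi/L)^2\|u\|^2$ by a one-dimensional Robin-type slicing, then exhibit $u_\frb$ as a minimiser---is exactly what the paper does. But the paper's slicing is much simpler than the one you propose, and this is worth noting because it dissolves precisely the obstacles you flag. Rather than foliating by rays \emph{perpendicular} to the lateral boundary (which would indeed force you to worry about the cut-locus along the axis, Jacobians, and whether rays escape to infinity), the paper works in \emph{cylindrical} coordinates $(r,\theta,z)$ and slices by \emph{vertical} half-lines $\{(r,\theta)\}\times[r\cot\alpha,\infty)$. Each such line starts on $\p\Lm_\frb$ and runs to infinity inside the cone, the Jacobian is just the standard $r\,\dd r\,\dd\theta\,\dd z$, and the only geometric input is that the surface element on the slanted boundary equals $r\,\dd r\,\dd\theta/\sin\alpha$. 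This turns the boundary coupling $-1$ into an effective one-dimensional parameter $-1/\sin\alpha=-2\pi/L$, and the inequality~\eqref{rob1D} applied fibre-wise in $z$ gives the lower bound immediately after discarding the non-negative $|\p_r u|^2+r^{-2}|\p_\theta u|^2$ terms.

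A second, minor difference: the paper does not verify the PDE and the Robin boundary condition for $u_\frb$ at all. Having already proved $\form[u]\ge -(2\pi/L)^2\|u\|^2$, it simply plugs $u_\frb(r,\theta,z)=\exp(-z/\sin\alpha)$ into the quadratic form and observes that equality holds, so $u_\frb$ is a ground state by the variational principle. Your direct verification is fine and instructive, but once the lower bound is in hand it is redundant.
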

\begin{proof}
Denote by $(r,\tt,z)\in\dR_+\tm \dS^1\tm \dR$ the cylindrical coordinates in $\dR^3$ and by $\aa \in (0,\frac{\pi}{2})$ the geodesic radius of $\frb$. Then, one can write for any $u\in H^1(\Lm_\frb)$,
	\[
		\frh_{\Lm_\frb}[u] 
		=
		\int_0^\infty 
		\int_0^{2\pi} 
		\left[
		\int_{r\cot\aa}^{\infty} 
		\left ( |\p_r u \rvert^2 + \frac{|\p_\tt u|^2}{r^2} + |\p_z u |^2\right )  \dd z 
		-	\frac{| u  (r, \tt, r\cot\aa) |^2}{\sin \aa}\right]r\dd\tt\dd r.
	\]	
	Using the one-dimensional inequality  \eqref{rob1D} we have
	\[
		\int_{r\cot\aa}^{\infty} 
		|\p_z u|^2\dd z - 
		\frac{| u(r,\tt ,r\cot\aa)|^2}{\sin\aa} \geq -\frac{1}{\sin^2\aa} \int_{r\cot\aa}^{\infty} |u | ^2 \dd z,
	\]
	and plugging it into the expression for $\frh_{\Lm_\frb}[u]$ above, we get
	\[
		\frh_{\Lm_\frb}[u]
		\geq 
		-\frac{1}{\sin^2\aa}\|u\|^2_{L^2(\Lm_\frb)}.
	\]
	On the other hand, evaluating the 
	form $\frh_{\Lm_\frb}$ on the $H^1$-function 
	\[
		u_\frb(r,\tt,z) = \exp\left(-\frac{z}{\sin\aa}\right),
	\]	 
	we immediately obtain $\frh_{\Lm_\frb}[u_\frb]= -\frac{1}{\sin^2\aa}  \| u \|^2_{L^2(\Lm_\frb)}$. In order to conclude the proof it remains to notice that $\sin\aa = \frac{L}{2\pi}$. 
\end{proof}

%

%

%


\subsection{Proof of Theorem~\ref{thm_cone}}\label{proof_cone}
%
In the following we denote by $\dd\sigma_\frm$, $\dd\sigma_\frb$ the $2$-dimensional surface measures
on $\frm$ and $\frb$ and by $\dd\lm_{\frm}$, $\dd\lm_{\frb}$ the respective $1$-dimensional
measures on their boundaries $\p\frm$ and $\p\frb$. We make use of the function $\rho_{\p\frm}$ introduced in Subsection~\ref{ssec:parallel}, which denotes the distance to the boundary $\partial \frm$ while $R_\frm$ defined in~\eqref{eq:in_radius} is the in-radius of $\frm$.  

Let $x\coloneqq ( r, \varphi,\theta)\in \frb$ be a point in the geodisic disk $\frb$ written in spherical coordinates. With our convention one has $\theta= \frac{\pi}{2} - R_\frb +\rho_{\p\frb}(x)$. As the eigenfunction $u_\frb$ defined in \eqref{eigenpaircone} does not depend on the $\varphi$-variable in spherical coordinates, 
 there exists, for any fixed $r>0$, a function $\psi_r\in C^{0,1}([0, R_\frb])$ satisfying 
\[
	u_\frb(r, x)= \psi_r(\rho_{\p\frb}(x)), \quad (r,x)\in \mathbb R_+\times \frb.
\]	
As a test function we use 
\[
	u_\frm(r,x)
	:= 
	\psi_r(\rho_{\p\frm}(x)),
	\qquad 
	(r,x)\in \dR_+ \tm \frm.
\]
Applying the inequalities in Proposition~\ref{prop:main}\,(i) and (ii), 
with $\cM = \frm$ and $\cB^\circ = \frb$,  slice-wise for each fixed $r > 0$, we get 
\begin{equation}\label{cl1}
\begin{aligned}
	\| \nb u_\frm\|^2_{L^2(\Lm_\frm;{\dC^3})} 
	&=
	\int_0^\infty \int_\frm |\nb u_\frm|^2 r^2\dd\sigma_\frm\dd r\\
	& = 
	\int_0^\infty 
	\int_\frm
	\left(|\p_r u_\frm|^2 + \frac{|\nb_{\dS^2} u_\frm|^2}{r^2}\right) r^2\dd\sigma_\frm\dd r\\
	& 
	\le
	\int_0^\infty 
	\int_\frb
	\left(|\p_r u_\frb|^2 + \frac{|\nb_{\dS^2}  u_\frb|^2}{r^2}\right) r^2\dd\sigma_\frb\dd r
	= \| \nb u_\frb \|^2_{L^2(\Lm_\frb;{\dC^3})},
\end{aligned}
\end{equation}
where $\nb$ is the gradient on $\dR^3$ and $\nabla_{\dS^2}$ is the gradient on $\dS^2$. 
Applying the inequality in Proposition~\ref{prop:main}~(i) in the same
manner we get
\begin{equation}\label{cl2}
\begin{aligned}
	\|u_\frm\|^2_{L^2(\Lm_\frm)} 
	& =
	\int_0^\infty \int_\frm |u_\frm|^2 r^2\dd\sigma_\frm\dd r\\
	& \le
	\int_0^\infty	\int_\frb| u_\frb|^2 r^2\dd\sigma_\frb\dd r
	= 
	\| u_\frb\|^2_{L^2(\Lm_\frb)}.
\end{aligned}
\end{equation}
Employing Proposition~\ref{prop:main}\,(iii) we find
\begin{equation}\label{cl3}
\begin{aligned}
	\| u_\frm|_{\p\Lm_\frm}\|^2_{L^2(\p\Lm_\frm)}
	& = 
	\int_0^\infty \int_{\p\frm} 
	|u_\frm|_{\Lm_\frm}|^2 r\, \dd\lm_\frm \dd r\\
	& =
	\int_0^\infty \int_{\p\frb} |u_\frb|_{\Lm_\frb}|^2 r\, \dd\lm_\frb \dd r
	=
	\| u_\frb|_{\p\Lm_\frb}\|^2_{L^2(\p\Lm_\frb)}.
\end{aligned}
\end{equation}
Note that the inequality~\eqref{cl1} and the equality~\eqref{cl3} yield
\begin{align}
\label{cl4}
	\form[u_\frm] \leq \frh_{\Lm_\frb}[u_\frb] < 0.
\end{align}
Using the variational characterization of $\Ev$ in~\eqref{lowesteigen} and the inequalities~\eqref{cl2},~\eqref{cl4} one obtains
\[
	\Ev\leq \frac{\frh_{\Lm_\frm}[u_\frm]}{\|u_\frm\|^2_{L^2(\Lm_\frm)}} \leq
	\frac{\frh_{\Lm_\frb}[u_\frb]}{\|u_\frb\|^2_{L^2(\Lm_\frb)}} = \lm(\Lm_\frb).
\]

\begin{appendix}
\section{Ground-state of a geodesic disk of constant curvature}\label{app:ground-state}
Let $\cB^\circ$ be a geodesic disk of constant curvature $K_\circ\in[0,\infty)$. The geodesic polar coordinates with the pole at the center of $\cB^\circ$ are the analogue of the usual Euclidean polar coordinates when $K_\circ >0$. Let $p\in\cB^\circ$ be  written as $(r,\tt)$ in the geodesic polar coordinates. The coordinate $r$ measures the length of the unique geodesic $\gamma$ connecting the center of $\cB^\circ$ and $p$ while $\theta$ corresponds to the unique geodesic passing through $p$ and orthogonal to $\gamma$, see \cite[Sec. 4-3]{Str88} for a rigorous definition. The next lemma summarizes some properties of the geodesic polar coordinates. 
\begin{lem}\cite[Section 4-3]{Str88}
On a $2$-manifold with constant curvature $K_\circ\ge 0$, the metric $g_{{\rm pol}}$ takes the form
\[
	g_{{\rm pol}}= \dd r^2 + h(r) \dd\theta^2,
\]
where the function $h$ has the form
\begin{myenum}
\item $h(r)\coloneqq r^2$, if $K_\circ=0$,
\item $h(r)\coloneqq \frac{\sin^2( \sqrt{K_\circ} r)}{ K_\circ}$, if $K_\circ >0$.
\end{myenum}
\end{lem}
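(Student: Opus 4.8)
The plan is to bring $g_{\rm pol}$ into the standard orthogonal normal form via the Gauss lemma, and then to recognise that, once the Gauss curvature equals the constant $K_\circ$, the coefficient of $\dd\theta^2$ is governed by an explicitly solvable Jacobi equation.

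First I would recall that the geodesic polar coordinates $(r,\theta)$ centred at the pole $p$ of $\cB^\circ$ are obtained by composing $\exp_p$ — a diffeomorphism onto $\cB^\circ$, since by definition the geodesic radius of $\cB^\circ$ is smaller than $\inj(p)$ — with the usual polar coordinates on $T_p\cM\sm\{0\}$. The Gauss lemma then supplies two facts: the radial geodesics $\{\theta=\mathrm{const}\}$ meet the geodesic circles $\{r=\mathrm{const}\}$ orthogonally, and $r$ is the arc-length parameter along each radial geodesic. Writing $g_{\rm pol}=E\,\dd r^2+2F\,\dd r\,\dd\theta+G\,\dd\theta^2$ in these coordinates, the first fact forces $F\equiv 0$ and the second forces $E\equiv 1$, so $g_{\rm pol}=\dd r^2+h(r,\theta)\,\dd\theta^2$ with $h>0$ on $\cB^\circ\sm\{p\}$.

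Next, for an orthogonal metric $\dd r^2+J^2\,\dd\theta^2$ with $J:=\sqrt{h}$, I would invoke the classical expression for the Gauss curvature in orthogonal coordinates, $K=-J_{rr}/J$. Setting $K\equiv K_\circ$ turns this into the linear ODE $J_{rr}+K_\circ J=0$ in the variable $r$ for each fixed $\theta$. The behaviour at the pole is pinned down by $d(\exp_p)_0=\mathrm{id}$, which makes the metric asymptotically Euclidean near $p$, hence $J(r,\theta)=r+o(r)$ as $r\to 0^+$, i.e.\ $J(0,\theta)=0$ and $\partial_r J(0,\theta)=1$. Since neither the ODE nor these initial conditions involve $\theta$, uniqueness of the solution gives that $J$, and therefore $h=J^2$, is independent of $\theta$; solving explicitly yields $J(r)=r$ when $K_\circ=0$ and $J(r)=\sin(\sqrt{K_\circ}\,r)/\sqrt{K_\circ}$ when $K_\circ>0$, which after squaring are exactly the two stated formulas.

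The two external inputs — the Gauss lemma and the formula $K=-J_{rr}/J$ — are classical (see \cite[Sec. 4-3]{Str88}), so there is no substantial obstacle; the only care needed is in the bookkeeping. One must extract the initial conditions $J(0,\theta)=0$, $\partial_r J(0,\theta)=1$ from the second-order Taylor expansion of $\exp_p$ at $p$, and, when $K_\circ>0$, use that the geodesic radius of $\cB^\circ$ stays below $\pi/\sqrt{K_\circ}$ (the injectivity radius of the round sphere of curvature $K_\circ$) so that $J=\sin(\sqrt{K_\circ}\,r)/\sqrt{K_\circ}>0$ and the coordinate frame remains nondegenerate on all of $\cB^\circ\sm\{p\}$.
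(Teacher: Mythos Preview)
Your argument is correct and is precisely the classical derivation one finds in the cited reference \cite[Sec.~4-3]{Str88}: Gauss's lemma puts the metric in the form $\dd r^2 + J^2\,\dd\theta^2$, the curvature identity $K=-J_{rr}/J$ reduces to the Jacobi equation, and the initial conditions at the pole pick out the stated solutions. Note, however, that the paper does not supply its own proof of this lemma at all --- it is stated as a quotation from Struik --- so there is nothing to compare against beyond confirming that your write-up reproduces the standard textbook argument.
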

We are now able to show that the first Robin
eigenvalue on a geodesic disk of constant curvature is simple and that the corresponding eigenfunction is radially symmetric. 
Let $\rho>0$ and $\cB^\circ_\rho$ be a geodesic disk of constant curvature $K_\circ$ and radius $\rho$. We introduce the
following $L^2$-space on $\cB^\circ_\rho$,
\[
	L^2_{{\rm pol}}(\cB_\rho^\circ)
	\coloneqq 
	L^2\big ( (0, \rho)\times [0, 2\pi); h^{\frac12}(r)\dd r \dd\theta\big),
\]
and also the associated first-order Sobolev space 
\[
	H^1_{{\rm pol}}(\cB_\rho^\circ)
	\coloneqq
	\big\{ v \in L^2_{{\rm pol}}(\cB^\circ_\rho)\colon |\nabla_{{\rm pol}} v| \in L^2_{{\rm pol}}(\cB_\rho^\circ)\big\},
\]
where $\nabla_{\text{pol}}$ stands for the gradient in the geodesic polar coordinates.
 By change of variables, one can write the Robin form on $\cB^\circ_\rho$ in geodesic polar coordinates as
\[
	\frh^{\text{pol}}_{\beta,\cB^\circ_\rho} [v]
	:= 
	\int_0^{2\pi} \int_0^\rho 
	\left ( \lvert \partial_r v \rvert^2 + \frac{\lvert \partial_\theta v \rvert^2}{ h(r)}  \right) h^\frac12(r)\dd r \dd\theta + \beta h^\frac12(\rho) \int_0^{2\pi} \lvert v(\rho,\theta)\rvert^2  \dd\theta,
\]
with $\dom \frh^{\text{pol}}_{\beta, \cB^\circ_\rho}= H^1_{\text{pol}}(\cB_\rho^\circ)$.
 Let us define the following complete family of orthogonal projections in the Hilbert space $L^2_{\rm pol}(\cB_\rho^\circ)$ \emph{via}
\[
	(\Pi_m v)(r,\tt) := 
	\frac{e^{\ii m\tt}}{\sqrt{2\pi}}\int_{0}^{2\pi}
	v(r,\phi) e^{-\ii m\phi}\dd \phi,\qquad m\in\dZ.
\]
Upon an obvious identification, this family induces the orthogonal decomposition of the Hilbert space 
$L^2_{\rm pol}(\cB_\rho^\circ)$,
\[
	L^2_{\rm pol}(\cB_\rho^\circ) \simeq
	\bigoplus_{m\in\dZ} L^2((0,\rho);h^\frac12(r)\dd r),
\]
and the respective 
decomposition of the operator $\sfH_{\beta,\cB_\rho^\circ}$, 
\[
	\sfH_{\beta,\cB_\rho^\circ}
	\simeq \bigoplus_{m\in\dZ}
	\sfH_{\beta,\cB_\rho^\circ}^{[m]},
\]
where the operator $\sfH^{[m]}_{\beta,\cB_\rho^\circ}$, $m\in\dZ$, represents the quadratic form
\[
\begin{aligned}
	\frh_{\beta,\cB_\rho^\circ}^{[m]}[\psi]
	& := 
	\int_0^\rho 
	\left ( \lvert  \psi'(r) \rvert^2 + \frac{m^2\lvert  \psi(r) \rvert^2}{ h(r)}  \right) h^\frac12(r)\dd r + \beta h^\frac12(\rho) \lvert|\psi(\rho)\rvert^2,\\
	\dom\frh_{\beta,\cB_\rho^\circ}^{[m]} & := \left\{\psi\colon \psi,\psi', m\psi h^{-\frac12}\in L^2((0,\rho);h^\frac12(r)\dd r)\right\}.
\end{aligned}
\]
It is clear that $\frh_{\beta,\cB_\rho^\circ}^{[0]}$
is the smallest among the above forms in the sense of the ordering. Moreover, one can prove by standard arguments that 
the ground-state
of $\sfH_{\beta,\cB_\rho^\circ}^{[0]}$
is simple. Furthermore,
we obtain from the min-max principle
that
the lowest eigenvalue $\sfH_{\beta,\cB_\rho^\circ}^{[0]}$
is strictly less than that of
$\sfH_{\beta,\cB_\rho^\circ}^{[m]}$ for any $m\ne 0$.
We conclude that the ground-state of $\sfH_{\bb,\cB_\rho^\circ}$ corresponds to
the ground-state of the self-adjoint fiber operator
 $\sfH_{\beta,\cB_\rho^\circ}^{[0]}$ 
and thus is simple and radially symmetric.

\end{appendix}

\subsection*{Acknowledgments}
M.K. was supported in part by the PHC Amadeus 37853TB funded by the French Ministry of Foreign Affairs and the French Ministry of Higher Education, Research and Innovation.	
V.L. acknowledges
the support by the grant No.~17-01706S of
the Czech Science Foundation (GA\v{C}R)
and by a public grant as part of the Investissement d'avenir
project, reference ANR-11-LABX-0056-LMH, LabEx LMH. 
The authors are deeply indebted to 
Daguang Chen for pointing out a flaw 
in the initial version of the proof for Lemma~\ref{ineq4}.
The authors are also very grateful to
Mikhail Karpukhin, David Krej\v{c}i\v{r}\'{i}k, Zhiqin Lu  and
Konstantin Pankrashkin for fruitful discussions.

%

\begin{thebibliography}{\textsc{BCD{\etalchar{+}}72}}


%
%

\bibitem[AFK13]{AFK13}
P. R. S. Antunes, P. Freitas, and J. B. Kennedy: \emph{Asymptotic behaviour and numerical approximation of optimal eigenvalues of the {R}obin {L}aplacian},
 ESAIM Control Optim. Calc. Var. \textbf{19} (2013) 438--459.

\bibitem[AFK17]{AFK17}
P.~R.~S. Antunes, P.~Freitas, 
and D.~Krej\v{c}i\v{r}\'{\i}k: \emph{
Bounds and extremal domains for Robin eigenvalues with negative boundary parameter,}
{Adv. Calc. Var.} {\bf 10} (2017) 357--380.

\bibitem[AM12]{AM12}
W. Arendt and R. Mazzeo: \emph{Friedlander's eigenvalue inequalities and the Dirichlet-to-Neumann semigroup,} Commun. Pure Appl. Anal. \textbf{11} (2012) 2201--2212.

\bibitem[Band80]{bandle}
C.~Bandle: \emph{Isoperimetric inequalities and applications,} Monographs and Studies in Mathematics, Pitman, 1980.

\bibitem[Bar77]{Bar77}
M.~Bareket: \emph{On an isoperimetric inequality for the first eigenvalue of a  boundary value problem,}
 SIAM J. Math. Anal. \textbf{8} (1977) 280--287.







\bibitem[Bos86]{Bossel_1986}
M.-H. Bossel: \emph{Membranes {\'{e}}lastiquement li{\'{e}}es: {E}xtension du th{\'{e}}or{\'{e}}me de {R}ayleigh-{F}aber-{K}rahn et de
l'in{\'{e}}galit{\'{e}} de {C}heeger,} 
{ C. R. Acad. Sci. Paris S\'{e}r. Math.} \textbf{302} (1986) 47--50.

\bibitem[BP16]{BP16}
V.~Bruneau and N.~Popoff: \emph{On the negative spectrum of the Robin Laplacian in corner domains,} { Anal. PDE} {\bf 9} (2016) 
1259--1283.



\bibitem[BFNT18]{BFNT18}
D. Bucur, V. Ferone, C. Nitsch, and  C. Trombetti: \emph{A sharp estimate for the first Robin-Laplacian eigenvalue with negative boundary parameter,}
to appear in
Atti Accad. Naz. Lincei, Cl. Sci. Fis. Mat. Nat., IX. Ser., Rend. Lincei, Mat. Appl., \texttt{arXiv:1810.06108}.

\bibitem[BFK17]{BFK17}
D. Bucur, P. Freitas, and J. B. Kennedy: \emph{The Robin problem.} In A. Henrot (Ed.): Shape optimization and spectral theory (De Gruyter, 2017), pp. 78-119. 

\bibitem[BG10]{BG10}
D.~Bucur and A.~Giacomini: \emph{
A variational approach to the isoperimetric inequality for the Robin eigenvalue problem,} { Arch. Rational Mech. Anal.} {\bf 198} (2010) 927--961.

\bibitem[BZ88]{BZ88}
Y. D. Burago and V. A. Zalgaller: \emph{Geometric inequalities.} Springer-Verlag, Berlin 1988.
     



\bibitem[CGH19]{CGH19}
F. El Chami, N. Ginoux and G. Habib: \emph{New eigenvalue estimates involving Bessel functions}, \texttt{arXiv:1908.02566}.



\bibitem[Dan06]{Daners_2006}
D.~Daners: \emph{A Faber-Krahn inequality for Robin problems in any space dimension,} 
{Math. Ann.} \textbf{335} (2006) 767--785.

%



\bibitem[EL17]{EL17}
P. Exner and V. Lotoreichik: \emph{A spectral isoperimetric inequality for cones,} 
{ Lett. Math. Phys.} {\bf 107} (2017) 717--732. 


\bibitem[F23]{F23}
 G. Faber: \emph{Beweis  dass  unter  allen  homogenen  Membranen  von  gleicher  Fl\"ache  undgleicher  Spannung  die  kreisf\"ormige  den  tiefsten  Grundton  gibt.}  Sitz.  bayer.  Akad.Wiss. (1923), 169--172.

\bibitem[FNT15]{FNT15}
V.~Ferone, C.~Nitsch, and C.~Trombetti: 
\emph{On a conjectured reverse Faber-Krahn inequality for a Steklov--type Laplacian eigenvalue,} Commun. Pure Appl. Anal. \textbf{14} (2015) 63--82.


\bibitem[FNT16]{FNT16}
V.~Ferone, C.~Nitsch, and C.~Trombetti: \emph{
On the maximal mean curvature of a smooth surface,} 
{ C. R. Math. Acad. Sci. Paris} {\bf 354} (2016)  891--895.

\bibitem[Fial41]{fial}
F.~Fiala: \emph{Les probl\`emes des isop\'erimetres sur les surfaces ouvertes \`a courbure positive},
{Comm.
Math. Helv.} \textbf{13} (1941)  293--346.

\bibitem[FK18]{FK18}
P. Freitas and J. B. Kennedy: \emph{Extremal domains and P\'olya-type inequalities for the Robin Laplacian on rectangles and unions of rectangles,} \texttt{arXiv:1805.10075}.

\bibitem[FK15]{FK15}
P.~Freitas and D.~Krej\v{c}i\v{r}\'{\i}k: \emph{The first {R}obin eigenvalue
with negative boundary parameter,} 
{ Adv. Math.} \textbf{280} (2015) 322--339.


\bibitem[FL18]{FL18}
P. Freitas and R.\,S.~Laugesen: 
\emph{From Steklov to Neumann and beyond, via Robin: the Szeg\H{o} way}, to appear in Canad. J. Math., \texttt{arXiv:1811.05573}.





\bibitem[Hart64]{hart}
P.~Hartman: \emph{Geodesic parallel coordinates in the large,} {Amer. J. Math.} \textbf{86} (1964) 705--727.





\bibitem[KW18]{KW18}
G. Keady and B. Wiwatanapataphee: \emph{Inequalities for the fundamental {R}obin eigenvalue for the {L}aplacian on {$N$}-dimensional rectangular parallelepipeds},
Math. Inequal. Appl. \textbf{21} (2018) 911--930.
 
\bibitem[K24]{K24}
 E. Krahn: \emph{\"Uber eine von Rayleigh formulierte Minimaleigenschaft des Kreises}, Math.Ann. \textbf{94} (1924) 97--100.

\bibitem[KL17a]{KL17a}
D.~Krej\v{c}i\v{r}\'{\i}k and V.~Lotoreichik: \emph{ Optimisation of the lowest
{R}obin eigenvalue in the exterior of a compact set,} { J. Convex Anal.} {\bf 25} (2018) 319--337.

\bibitem[KL17b]{KL17b}
D.~Krej\v{c}i\v{r}\'{i}k and V.~Lotoreichik: \emph{Optimisation of the lowest Robin eigenvalue in the exterior of a compact set, II: non-convex domains and higher dimensions,} to appear in Potential Anal. \texttt{arXiv:1707.02269}. 
%



\bibitem[L19]{L19}
R.\,S.~Laugesen: \emph{The Robin Laplacian-spectral conjectures, rectangular theorems,} \texttt{arXiv:1905.07658}.

\bibitem[Lee97]{Lee97}
J. M. Lee: \emph{Riemannian Manifolds: An Introduction to Curvature.} Springer: New York, NY, USA,
(1997).

\bibitem[LP08]{LP08}
M.~Levitin and L.~Parnovski: \emph{On the principal eigenvalue of a Robin problem with a large parameter,} 
{\ Math. Nachr.} {\bf 281}  (2008) 272--281.






\bibitem[OS79]{OS79}
R.~Osserman: \emph{
Bonnesen-style isoperimetric inequalities,}
{ Amer. Math. Monthly} {\bf 86} (1979) 1--29.



\bibitem[PP15]{PP15}
K.~Pankrashkin and N.~Popoff: \emph{Mean curvature bounds and eigenvalues of Robin Laplacians,} Calc. Var. Partial Differential Equations \textbf{54} (2015) 1947--1961.



\bibitem[PW61]{PW61}
L.~E. Payne and H.~F. Weinberger: \emph{Some isoperimetric inequalities for
membrane frequencies and torsional rigidity,}
{J. Math. Anal. Appl.} \textbf{2}
  (1961) 210--216.

\bibitem[P16]{P16}
K.~Pankrashkin: \emph{On the discrete spectrum of Robin Laplacians in conical domains,}
{ Math. Model. Nat. Phenom.} {\bf 11} (2016) 100--110.


\bibitem[R77]{R77}
J. W. W. Rayleigh: \emph{The  theory  of  sound},   Macmillan,  London, 1877.

\bibitem[RS-IV]{RS4}
M.~Reed and B.~Simon: \emph{Methods of modern mathematical physics,}
  {IV}.~{A}nalysis of operators, Academic Press, New York, 1978.

\bibitem[S01]{Savo_2001}
A.~Savo: \emph{Lower bounds for the nodal length of eigenfunctions of the {L}aplacian,}  
{ Ann. Glob. Anal. Geom.} \textbf{16} (2001) 133--151.

\bibitem[S19]{S19}
A.~Savo: \emph{Optimal eigenvalue estimates for the Robin Laplacian on Riemannian manifolds,} \texttt{arXiv:1904.07525}.


\bibitem[Str88]{Str88}
J.~D.~Struik:
\emph{Lectures on classical differential geometry}, 
Dover Publications, Inc., New York, 1988.


\bibitem[S54]{S54}
G. Szeg\H{o}:
\emph{Inequalities for certain eigenvalues of a membrane of given area},
J. Rational Mech. Anal. \textbf{3} (1954) 343--356.
  

%

\bibitem[V19]{V19}
A.~Vikulova: \emph{Parallel coordinates in three dimensions and sharp spectral isoperimetric inequalities},
\texttt{arXiv:1906.11141}.

\bibitem[W56]{W56}
H. F. Weinberger: \emph{An isoperimetric inequality for the {$N$}-dimensional free membrane problem},
J. Rational Mech. Anal. \textbf{5} (1956) 633--636.
   


\end{thebibliography}

%

\newcommand{\etalchar}[1]{$^{#1}$}

\end{document}